
\documentclass{amsart}
\usepackage{amssymb}
\usepackage{amsfonts,color}
\usepackage{graphicx}
\usepackage{hyperref}
\setcounter{MaxMatrixCols}{10}

\newtheorem{theorem}{Theorem}[section]
\theoremstyle{plain}

\newtheorem{axiom}{Axiom}

\newtheorem{conjecture}{Conjecture}
\newtheorem{corollary}{Corollary}[section]

\newtheorem{definition}{Definition}[section]
\newtheorem{example}{Example}[section]
\newtheorem{exercise}{Exercise}
\newtheorem{lemma}{Lemma}[section]

\newtheorem{proposition}{Proposition}[section]
\newtheorem{remark}{Remark}[section]

\numberwithin{equation}{section}
\typeout{TCILATEX Macros for Scientific Word 3.0 <19 May 1997>.}
\typeout{NOTICE:  This macro file is NOT proprietary and may be 
freely copied and distributed.}
\makeatletter
%

\providecommand{\BOXEDSPECIAL}[4]{\hbox to #2{\raise #3\hbox to #2{\null #1\hfil}}}

\newcount\@hour\newcount\@minute\chardef\@x10\chardef\@xv60
\def\tcitime{
\def\@time{%
  \@minute\time\@hour\@minute\divide\@hour\@xv
  \ifnum\@hour<\@x 0\fi\the\@hour:%
  \multiply\@hour\@xv\advance\@minute-\@hour
  \ifnum\@minute<\@x 0\fi\the\@minute
  }}%

\@ifundefined{hyperref}{}{}

\@ifundefined{qExtProgCall}{\def\qExtProgCall#1#2#3#4#5#6{\relax}}{}
%
%
%
%
\def\QCTOpt[#1]#2{%
  \def\QCTOptB{#1}
  \def\QCTOptA{#2}
}
\def\QCTNOpt#1{%
  \def\QCTOptA{#1}
  \let\QCTOptB\empty
}
\def\Qct{%
  \@ifnextchar[{%
    \QCTOpt}{\QCTNOpt}
}
\def\QCBOpt[#1]#2{%
  \def\QCBOptB{#1}
  \def\QCBOptA{#2}
}
\def\QCBNOpt#1{%
  \def\QCBOptA{#1}
  \let\QCBOptB\empty
}
\def\Qcb{%
  \@ifnextchar[{%
    \QCBOpt}{\QCBNOpt}
}
\def\PrepCapArgs{%
  \ifx\QCBOptA\empty
    \ifx\QCTOptA\empty
      {}%
    \else
      \ifx\QCTOptB\empty
        {\QCTOptA}%
      \else
        [\QCTOptB]{\QCTOptA}%
      \fi
    \fi
  \else
    \ifx\QCBOptA\empty
      {}%
    \else
      \ifx\QCBOptB\empty
        {\QCBOptA}%
      \else
        [\QCBOptB]{\QCBOptA}%
      \fi
    \fi
  \fi
}
\newcount\GRAPHICSTYPE
\GRAPHICSTYPE=\z@
\def\GRAPHICSPS#1{%
 \ifcase\GRAPHICSTYPE
   \special{ps: #1}%
 \or
   \special{language "PS", include "#1"}%
 \fi
}%
%
%
%
\def\graffile#1#2#3#4#5{%
    \bgroup
    \leavevmode
    \@ifundefined{bbl@deactivate}{\def~{\string~}}{\activesoff}
    \raise -#4 \BOXTHEFRAME{%
       \BOXEDSPECIAL{#1}{#2}{#3}{#5}}%
    \egroup
}%
%
\def\draftbox#1#2#3#4{%
 \leavevmode\raise -#4 \hbox{%
  \frame{\rlap{\protect\tiny #1}\hbox to #2%
   {\vrule height#3 width\z@ depth\z@\hfil}%
  }%
 }%
}%
\newcount\draft
\draft=\z@

\newif\ifwasdraft
\wasdraftfalse

\def\GRAPHIC#1#2#3#4#5{%
 \ifnum\draft=\@ne\draftbox{#2}{#3}{#4}{#5}%
  \else\graffile{#1}{#3}{#4}{#5}{#2}%
  \fi
 }%
\def\addtoLaTeXparams#1{%
    \edef\LaTeXparams{\LaTeXparams #1}}%
%

\newif\ifBoxFrame \BoxFramefalse
\newif\ifOverFrame \OverFramefalse
\newif\ifUnderFrame \UnderFramefalse

\def\BOXTHEFRAME#1{%
   \hbox{%
      \ifBoxFrame
         \frame{#1}%
      \else
         {#1}%
      \fi
   }%
}

\def\doFRAMEparams#1{\BoxFramefalse\OverFramefalse\UnderFramefalse\readFRAMEparams#1\end}%
\def\readFRAMEparams#1{%
 \ifx#1\end%
  \let\next=\relax
  \else
  \ifx#1i\dispkind=\z@\fi
  \ifx#1d\dispkind=\@ne\fi
  \ifx#1f\dispkind=\tw@\fi
  \ifx#1t\addtoLaTeXparams{t}\fi
  \ifx#1b\addtoLaTeXparams{b}\fi
  \ifx#1p\addtoLaTeXparams{p}\fi
  \ifx#1h\addtoLaTeXparams{h}\fi
  \ifx#1X\BoxFrametrue\fi
  \ifx#1O\OverFrametrue\fi
  \ifx#1U\UnderFrametrue\fi
  \ifx#1w
    \ifnum\draft=1\wasdrafttrue\else\wasdraftfalse\fi
    \draft=\@ne
  \fi
  \let\next=\readFRAMEparams
  \fi
 \next
 }%
%

\def\IFRAME#1#2#3#4#5#6{%
      \bgroup
      \let\QCTOptA\empty
      \let\QCTOptB\empty
      \let\QCBOptA\empty
      \let\QCBOptB\empty
      #6%
      \parindent=0pt%
      \leftskip=0pt
      \rightskip=0pt
      \setbox0 = \hbox{\QCBOptA}%
      \@tempdima = #1\relax
      \ifOverFrame
          \typeout{This is not implemented yet}%
          \show\HELP
      \else
         \ifdim\wd0>\@tempdima
            \advance\@tempdima by \@tempdima
            \ifdim\wd0 >\@tempdima
               \textwidth=\@tempdima
               \setbox1 =\vbox{%
                  \noindent\hbox to \@tempdima{\hfill\GRAPHIC{#5}{#4}{#1}{#2}{#3}\hfill}\\%
                  \noindent\hbox to \@tempdima{\parbox[b]{\@tempdima}{\QCBOptA}}%
               }%
               \wd1=\@tempdima
            \else
               \textwidth=\wd0
               \setbox1 =\vbox{%
                 \noindent\hbox to \wd0{\hfill\GRAPHIC{#5}{#4}{#1}{#2}{#3}\hfill}\\%
                 \noindent\hbox{\QCBOptA}%
               }%
               \wd1=\wd0
            \fi
         \else
            \ifdim\wd0>0pt
              \hsize=\@tempdima
              \setbox1 =\vbox{%
                \unskip\GRAPHIC{#5}{#4}{#1}{#2}{0pt}%
                \break
                \unskip\hbox to \@tempdima{\hfill \QCBOptA\hfill}%
              }%
              \wd1=\@tempdima
           \else
              \hsize=\@tempdima
              \setbox1 =\vbox{%
                \unskip\GRAPHIC{#5}{#4}{#1}{#2}{0pt}%
              }%
              \wd1=\@tempdima
           \fi
         \fi
         \@tempdimb=\ht1
         \advance\@tempdimb by \dp1
         \advance\@tempdimb by -#2%
         \advance\@tempdimb by #3%
         \leavevmode
         \raise -\@tempdimb \hbox{\box1}%
      \fi
      \egroup%
}%
%
\def\DFRAME#1#2#3#4#5{%
 \begin{center}
     \let\QCTOptA\empty
     \let\QCTOptB\empty
     \let\QCBOptA\empty
     \let\QCBOptB\empty
     \ifOverFrame 
        #5\QCTOptA\par
     \fi
     \GRAPHIC{#4}{#3}{#1}{#2}{\z@}
     \ifUnderFrame 
        \nobreak\par\nobreak#5\QCBOptA
     \fi
 \end{center}%
 }%
%
\def\FFRAME#1#2#3#4#5#6#7{%
 \begin{figure}[#1]%
  \let\QCTOptA\empty
  \let\QCTOptB\empty
  \let\QCBOptA\empty
  \let\QCBOptB\empty
  \ifOverFrame
    #4
    \ifx\QCTOptA\empty
    \else
      \ifx\QCTOptB\empty
        \caption{\QCTOptA}%
      \else
        \caption[\QCTOptB]{\QCTOptA}%
      \fi
    \fi
    \ifUnderFrame\else
      \label{#5}%
    \fi
  \else
    \UnderFrametrue%
  \fi
  \begin{center}\GRAPHIC{#7}{#6}{#2}{#3}{\z@}\end{center}%
  \ifUnderFrame
    #4
    \ifx\QCBOptA\empty
      \caption{}%
    \else
      \ifx\QCBOptB\empty
        \caption{\QCBOptA}%
      \else
        \caption[\QCBOptB]{\QCBOptA}%
      \fi
    \fi
    \label{#5}%
  \fi
  \end{figure}%
 }%
%
%
%
%
%
\newcount\dispkind%

\def\makeactives{
  \catcode`\"=\active
  \catcode`\;=\active
  \catcode`\:=\active
  \catcode`\'=\active
  \catcode`\~=\active
}
\bgroup
   \makeactives
   \gdef\activesoff{%
      \def"{\string"}
      \def;{\string;}
      \def:{\string:}
      \def'{\string'}
    }
\egroup

\def\FRAME#1#2#3#4#5#6#7#8{%
 \bgroup
 \ifnum\draft=\@ne
   \wasdrafttrue
 \else
   \wasdraftfalse%
 \fi
 \def\LaTeXparams{}%
 \dispkind=\z@
 \def\LaTeXparams{}%
 \doFRAMEparams{#1}%
 \ifnum\dispkind=\z@\IFRAME{#2}{#3}{#4}{#7}{#8}{#5}\else
  \ifnum\dispkind=\@ne\DFRAME{#2}{#3}{#7}{#8}{#5}\else
   \ifnum\dispkind=\tw@
    \edef\@tempa{\noexpand\FFRAME{\LaTeXparams}}%
    \@tempa{#2}{#3}{#5}{#6}{#7}{#8}%
    \fi
   \fi
  \fi
  \ifwasdraft\draft=1\else\draft=0\fi{}%
  \egroup
 }%
%

\def\TEXUX#1{"texux"}

%
%
%
%
%
%
%
%
%
%

%
\long\def\QQQ#1#2{%
     \long\expandafter\def\csname#1\endcsname{#2}}%
\@ifundefined{QTP}{\def\QTP#1{}}{}
\@ifundefined{QEXCLUDE}{\def\QEXCLUDE#1{}}{}
\@ifundefined{Qlb}{}{}
\@ifundefined{Qlt}{}{}
\long\def\QQA#1#2{}%
\newcommand{\QTR}[2]{\csname text#1\endcsname{#2}}
\def\EXPAND#1[#2]#3{}%
\def\NOEXPAND#1[#2]#3{}%
\def\LaTeXparent#1{}%
\def\ChildStyles#1{}%
\def\ChildDefaults#1{}%
\def\QTagDef#1#2#3{}%

\@ifundefined{correctchoice}{}{}
\@ifundefined{HTML}{\def\HTML#1{\relax}}{}
\@ifundefined{TCIIcon}{\def\TCIIcon#1#2#3#4{\relax}}{}
\if@compatibility
  \typeout{Not defining UNICODE or CustomNote commands for LaTeX 2.09.}
\else
  \providecommand{\UNICODE}[2][]{}
  
\fi

%
\@ifundefined{StyleEditBeginDoc}{}{}
%
\def\QQfnmark#1{\footnotemark}

%
%
\@ifundefined{TCIMAKEINDEX}{}{\makeindex}%
%
\@ifundefined{abstract}{%
 \def\abstract{%
  \if@twocolumn
   \section*{Abstract (Not appropriate in this style!)}%
   \else \small 
   \begin{center}{\bf Abstract\vspace{-.5em}\vspace{\z@}}\end{center}%
   \quotation 
   \fi
  }%
 }{%
 }%
\@ifundefined{endabstract}{\def\endabstract
  {\if@twocolumn\else\endquotation\fi}}{}%
\@ifundefined{maketitle}{\def\maketitle#1{}}{}%
\@ifundefined{affiliation}{\def\affiliation#1{}}{}%
\@ifundefined{proof}{}{}%
\@ifundefined{endproof}{}{}%
\@ifundefined{newfield}{\def\newfield#1#2{}}{}%
\@ifundefined{chapter}{\def\chapter#1{\par(Chapter head:)#1\par }%
 \newcount\c@chapter}{}%
\@ifundefined{part}{\def\part#1{\par(Part head:)#1\par }}{}%
\@ifundefined{section}{\def\section#1{\par(Section head:)#1\par }}{}%
\@ifundefined{subsection}{\def\subsection#1%
 {\par(Subsection head:)#1\par }}{}%
\@ifundefined{subsubsection}{\def\subsubsection#1%
 {\par(Subsubsection head:)#1\par }}{}%
\@ifundefined{paragraph}{\def\paragraph#1%
 {\par(Subsubsubsection head:)#1\par }}{}%
\@ifundefined{subparagraph}{\def\subparagraph#1%
 {\par(Subsubsubsubsection head:)#1\par }}{}%
\@ifundefined{therefore}{}{}%
\@ifundefined{backepsilon}{}{}%
\@ifundefined{yen}{}{}%
\@ifundefined{registered}{%
   \def\registered{\relax\ifmmode{}\r@gistered
                    \else$\m@th\r@gistered$\fi}%
 \def\r@gistered{^{\ooalign
  {\hfil\raise.07ex\hbox{$\scriptstyle\rm\text{R}$}\hfil\crcr
  \mathhexbox20D}}}}{}%
\@ifundefined{Eth}{}{}%
\@ifundefined{eth}{}{}%
\@ifundefined{Thorn}{}{}%
\@ifundefined{thorn}{}{}%
%
\@ifundefined{degree}{}{}%
%
\newdimen\theight
\def\Column{%
 \vadjust{\setbox\z@=\hbox{\scriptsize\quad\quad tcol}%
  \theight=\ht\z@\advance\theight by \dp\z@\advance\theight by \lineskip
  \kern -\theight \vbox to \theight{%
   \rightline{\rlap{\box\z@}}%
   \vss
   }%
  }%
 }%
\def\qed{%
 \ifhmode\unskip\nobreak\fi\ifmmode\ifinner\else\hskip5\p@\fi\fi
 \hbox{\hskip5\p@\vrule width4\p@ height6\p@ depth1.5\p@\hskip\p@}%
 }%
\def\miss{\hbox{\vrule height2\p@ width 2\p@ depth\z@}}%
%
%
\def\tcol#1{{\baselineskip=6\p@ \vcenter{#1}} \Column}  %
%
%
\@ifundefined{note}{}{}%

\def\newfmtname{LaTeX2e}

\ifx\fmtname\newfmtname
  \DeclareOldFontCommand{\rm}{\normalfont\rmfamily}{\mathrm}
  \DeclareOldFontCommand{\sf}{\normalfont\sffamily}{\mathsf}
  \DeclareOldFontCommand{\tt}{\normalfont\ttfamily}{\mathtt}
  \DeclareOldFontCommand{\bf}{\normalfont\bfseries}{\mathbf}
  \DeclareOldFontCommand{\it}{\normalfont\itshape}{\mathit}
  \DeclareOldFontCommand{\sl}{\normalfont\slshape}{\@nomath\sl}
  \DeclareOldFontCommand{\sc}{\normalfont\scshape}{\@nomath\sc}
\fi

%


\@ifundefined{theorem}{\newtheorem{theorem}{Theorem}}{}
\@ifundefined{lemma}{\newtheorem{lemma}[theorem]{Lemma}}{}
\@ifundefined{corollary}{\newtheorem{corollary}[theorem]{Corollary}}{}
\@ifundefined{conjecture}{}{}
\@ifundefined{proposition}{\newtheorem{proposition}[theorem]{Proposition}}{}
\@ifundefined{axiom}{}{}
\@ifundefined{remark}{\newtheorem{remark}{Remark}}{}
\@ifundefined{example}{\newtheorem{example}{Example}}{}
\@ifundefined{exercise}{}{}
\@ifundefined{definition}{\newtheorem{definition}{Definition}}{}

\@ifundefined{mathletters}{%
  \newcounter{equationnumber}  
  \def\mathletters{%
     \addtocounter{equation}{1}
     \edef\@currentlabel{\theequation}%
     \setcounter{equationnumber}{\c@equation}
     \setcounter{equation}{0}%
     \edef\theequation{\@currentlabel\noexpand\alph{equation}}%
  }
  
}{}

\@ifundefined{BibTeX}{%
    \def\BibTeX{{\rm B\kern-.05em{\sc i\kern-.025em b}\kern-.08em
                 T\kern-.1667em\lower.7ex\hbox{E}\kern-.125emX}}}{}%
\@ifundefined{AmS}%
    {\def\AmS{{\protect\usefont{OMS}{cmsy}{m}{n}%
                A\kern-.1667em\lower.5ex\hbox{M}\kern-.125emS}}}{}%
\@ifundefined{AmSTeX}{}{}%
%

\def\@@eqncr{\let\@tempa\relax
    \ifcase\@eqcnt \def\@tempa{& & &}\or \def\@tempa{& &}%
      \else \def\@tempa{&}\fi
     \@tempa
     \if@eqnsw
        \iftag@
           \@taggnum
        \else
           \@eqnnum\stepcounter{equation}%
        \fi
     \fi
     \global\tag@false
     \global\@eqnswtrue
     \global\@eqcnt\z@\cr}

\def\TCItag{\@ifnextchar*{\@TCItagstar}{\@TCItag}}
\def\@TCItag#1{%
    \global\tag@true
    \global\def\@taggnum{(#1)}}
\def\@TCItagstar*#1{%
    \global\tag@true
    \global\def\@taggnum{#1}}
%
%
%
%
%
%
%
%
%
%
%
%
%
%
%
%
%
%
%
%
%
%
%
%
%
%
%
%
%
%
%
%
%
%
%
%
%
%
%
%
%
%
%
%
%
%
%
%
%
%
%
%
%
%
%
%
%
%
%
%
%
%
%

%
%

\makeatother

\begin{document}
\title[block operator matrices]{some basic properties of block operator matrices}
\author[Jin]{Guohai Jin}
\address{Guohai Jin \\
\indent  School of Mathematical Sciences, Inner Mongolia University, Hohhot, 010021, China}\email{ghjin2006@gmail.com}
\author[Alatancang]{Alatancang Chen}
\address{Alatancang Chen \\
\indent  School of Mathematical Sciences, Inner Mongolia University, Hohhot, 010021, China;  Huhhot University for Nationalities, Hohhot, 010051, China}\email{alatanca@imu.edu.cn}
\subjclass[2010]{47A05, 47B25, 47E05}
\keywords{unbounded block operator matrices, adjoint operation, self-adjoint, differential operators}
\thanks{Corresponding author: Alatancang Chen}

\begin{abstract}
General approach to the multiplication or adjoint operation of $2\times 2$ block operator matrices with unbounded entries are founded.
Furthermore, criteria for self-adjointness of block operator matrices based on their entry operators are established.
\end{abstract}
\maketitle

\section{introduction}
Block operator matrices arise in various areas of mathematical physics such as ordinary differential equations \cite{QC1, QC2, SS},
theory of elasticity \cite{LX,YZL, ZZ}, hydrodynamics \cite{KK2001,KK2003}, magnetohydrodynamics \cite{Lif1989},
quantum mechanics \cite{Tha}, and optimal control \cite{KZ, Wyss2011}.
The spectral properties of block operator matrices are of vital importance as they govern
for instance the solvability and stability of the underlying physical systems.
As a basis for spectral analysis,
the multiplication or adjoint operation
and self-adjointness of block operator matrices with unbounded entries
have attracted considerable attention and have been investigated case by case,
see, e.g, \cite{ALMS, Eng1998, Hassi, MS2008, Nag1989} for the former
and \cite{EL, FFMM, KLT, MS, Stra} for the latter,
or the monograph \cite{Tre} for both of these topics.
As has been pointed out in \cite{MS2008}, what is essentially trivial
for bounded operators appears to become erratic for unbounded operators.
The purpose of this paper is to build a common framework for these problems.

To this end, let us first recall some notions on block operator matrices.
Throughout this paper, we will denote by $X_1, X_2$ complex Banach spaces,
$X_1^*, X_2^*$ the adjoint spaces (see \cite[Section III.1.4]{Ka1980}),
and $X:=X_1\times X_2$ the product space equipped with the norm
$$\|(x_1\ x_2)^t\|:=(\|x_1\|^2+\|x_2\|^2)^{\frac{1}{2}}.$$
It is well known that $(X_1\times X_2)^*$ is isometrically isomorphic to $X_1^*\times X_2^*$ equipped with the norm
$$\|(f_1\ f_2)^t\|:=(\|f_1\|^2+\|f_2\|^2)^{\frac{1}{2}}$$
such that if the element $f$ of $(X_1\times X_2)^*$ is identified with the element $(f_1\ f_2)^t$ of $X_1^*\times X_2^*$, then
$$(f,x)=(f_1,x_1)+(f_2,x_2)$$
whenever $x=(x_1\ x_2)^t\in X_1\times X_2$ (see \cite[Theorem 1.10.13]{M1998}).
Following Engel \cite{Eng1998}, we define the injections $J_1, J_2$ and the projections $P_1, P_2$ as follows.
\begin{align*}
J_1:X_1\to X,\
J_1x_1:=\left(
         \begin{array}{c}
           x_1 \\
           0 \\
         \end{array}
       \right) \mbox{~and~}
P_1:X\to X_1,\
P_1\left(
     \begin{array}{c}
       x_1 \\
       x_2 \\
     \end{array}
   \right):=x_1,\\
J_2:X_2\to X,\
J_2x_2:=\left(
         \begin{array}{c}
           0 \\
           x_2 \\
         \end{array}
       \right) \mbox{~and~}
P_2:X\to X_2,\
P_2\left(
     \begin{array}{c}
       x_1 \\
       x_2 \\
     \end{array}
   \right):=x_2.
\end{align*}
Furthermore, we denote by $Q_1, Q_2$ the projections
\begin{align*}
Q_1:X\to X,\
Q_1\left(
     \begin{array}{c}
       x_1 \\
       x_2 \\
     \end{array}
   \right):=\left(
             \begin{array}{c}
               x_1 \\
               0 \\
             \end{array}
           \right),\\
Q_2:X\to X,\
Q_2\left(
     \begin{array}{c}
       x_1 \\
       x_2 \\
     \end{array}
   \right):=\left(
             \begin{array}{c}
               0 \\
               x_2 \\
             \end{array}
           \right).
\end{align*}

\begin{definition}(\cite[p. 97]{Wyss2008})
Let $A_{jk}:\mathcal D(A_{jk})\subset X_k\to X_j$ be linear operators, $j,k=1,2$.
The matrix
\begin{equation}\label{eqA1.1}
\mathcal A:=\left(
      \begin{array}{cc}
        A_{11} & A_{12} \\
        A_{21} & A_{22} \\
      \end{array}
    \right)
\end{equation}
is called a ($2\times 2$) block operator matrix on $X$.
It induces a linear operator on $X$ which is also denoted by $\mathcal A$:
\begin{align*}
\mathcal D(\mathcal A):&=(\mathcal D(A_{11})\cap\mathcal D(A_{21}))\times(\mathcal D(A_{12})\cap\mathcal D(A_{22})),\\
\mathcal A\left(
   \begin{array}{c}
     x_1 \\
     x_2 \\
   \end{array}
 \right):&=\left(
             \begin{array}{c}
               A_{11}x_1+A_{12}x_2 \\
               A_{21}x_1+A_{22}x_2 \\
             \end{array}
           \right),            \left(
                                  \begin{array}{c}
                                    x_1 \\
                                    x_2 \\
                                  \end{array}
                                \right)\in\mathcal D(\mathcal A).
\end{align*}
\end{definition}

\section{product and adjoint}
In this section, we shall establish rules for the product and adjoint operations of block operator matrices.

\begin{lemma}\label{lemA2.1}
Let $\mathcal A:\mathcal D(\mathcal A)\subset X\to X$ be a linear operator.
The following statements are equivalent.
\begin{enumerate}
\item[(a)] $\mathcal A$ has a matrix representation \eqref{eqA1.1},
\item[(b)] $\mathcal D(\mathcal A)= P_1\mathcal D(\mathcal A)\times P_2\mathcal D(\mathcal A)$,
\item[(c)] $Q_1\mathcal D(\mathcal A)\subset\mathcal D(\mathcal A)$ or, equivalently, $Q_2\mathcal D(\mathcal A)\subset\mathcal D(\mathcal A)$.
\end{enumerate}
Furthermore, if one of the above statements holds, then
\begin{equation}\label{eqA2.1}
\mathcal A=\left(
    \begin{array}{cc}
      P_1\mathcal AJ_1 & P_1\mathcal AJ_2 \\
      P_2\mathcal AJ_1 & P_2\mathcal AJ_2 \\
    \end{array}
  \right)
\end{equation}
in the sense of linear operators on $X$.
\end{lemma}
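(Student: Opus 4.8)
The plan is to prove the cycle of implications (a) $\Rightarrow$ (b) $\Rightarrow$ (c) $\Rightarrow$ (a), and then to read off formula \eqref{eqA2.1} directly from the construction carried out in the last step. Throughout I would use, without further comment, that $0\in\mathcal D(\mathcal A)$ since $\mathcal A$ is linear, that $P_kJ_k$ is the identity of $X_k$, that $J_jP_j=Q_j$, and that $Q_1+Q_2$ acts as the identity of $X$. The first two implications are essentially bookkeeping. For (a) $\Rightarrow$ (b): if $\mathcal A$ is induced by a block operator matrix \eqref{eqA1.1}, then by definition $\mathcal D(\mathcal A)$ is the product $(\mathcal D(A_{11})\cap\mathcal D(A_{21}))\times(\mathcal D(A_{12})\cap\mathcal D(A_{22}))$, and since both factors contain $0$, applying $P_1$ and $P_2$ identifies them as $P_1\mathcal D(\mathcal A)$ and $P_2\mathcal D(\mathcal A)$; this is (b). For (b) $\Rightarrow$ (c): given $x=(x_1\ x_2)^t\in\mathcal D(\mathcal A)$ we have $x_1\in P_1\mathcal D(\mathcal A)$ and $0\in P_2\mathcal D(\mathcal A)$, so (b) gives $Q_1x=(x_1\ 0)^t\in P_1\mathcal D(\mathcal A)\times P_2\mathcal D(\mathcal A)=\mathcal D(\mathcal A)$; the equivalence with the $Q_2$-statement comes from $Q_2x=x-Q_1x$ and linearity.

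The implication (c) $\Rightarrow$ (a) is where the work is. Assume, say, $Q_1\mathcal D(\mathcal A)\subset\mathcal D(\mathcal A)$, hence also $Q_2\mathcal D(\mathcal A)\subset\mathcal D(\mathcal A)$, and set $A_{jk}:=P_j\mathcal A J_k$ for $j,k=1,2$. First I would pin down domains. Since $J_k$ is everywhere defined, $\mathcal D(A_{jk})=\{x_k\in X_k:J_kx_k\in\mathcal D(\mathcal A)\}$, and this set is exactly $P_k\mathcal D(\mathcal A)$: for $\subseteq$ use $x_k=P_kJ_kx_k$; for $\supseteq$, if $x\in\mathcal D(\mathcal A)$ has $P_kx=x_k$ then $J_kx_k=J_kP_kx=Q_kx\in\mathcal D(\mathcal A)$ by (c). Consequently the block operator matrix with entries $A_{jk}$ induces on $X$ an operator whose domain is $P_1\mathcal D(\mathcal A)\times P_2\mathcal D(\mathcal A)$, and this product coincides with $\mathcal D(\mathcal A)$: one inclusion is trivial, and conversely if $x_1\in P_1\mathcal D(\mathcal A)$ and $x_2\in P_2\mathcal D(\mathcal A)$ then $J_1x_1,J_2x_2\in\mathcal D(\mathcal A)$ by the same argument, so $(x_1\ x_2)^t=J_1x_1+J_2x_2\in\mathcal D(\mathcal A)$ by linearity.

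It remains to match the actions. For $x=(x_1\ x_2)^t\in\mathcal D(\mathcal A)$ I would write $\mathcal A x=\mathcal A J_1x_1+\mathcal A J_2x_2$ and split each term via $\mathcal A J_kx_k=J_1P_1\mathcal A J_kx_k+J_2P_2\mathcal A J_kx_k$ (using that $Q_1+Q_2$ is the identity and $J_jP_j=Q_j$); collecting terms gives $\mathcal A x=(A_{11}x_1+A_{12}x_2\ \ A_{21}x_1+A_{22}x_2)^t$, which is precisely the action of the induced block operator matrix. Hence (a) holds, and because the entries produced are $A_{jk}=P_j\mathcal A J_k$, the same chain of equalities proves \eqref{eqA2.1} whenever any of (a)--(c) is known.

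I expect the one genuine pitfall to be the meaning of the equality in \eqref{eqA2.1}. The entries $P_j\mathcal A J_k$ reconstructed above need not equal, \emph{as individual operators}, the $A_{jk}$ of a prescribed representation \eqref{eqA1.1}: for instance $\mathcal D(P_1\mathcal A J_1)=P_1\mathcal D(\mathcal A)=\mathcal D(A_{11})\cap\mathcal D(A_{21})$, which equals $\mathcal D(A_{11})$ only if $\mathcal D(A_{11})\subset\mathcal D(A_{21})$. So \eqref{eqA2.1} has to be read, and verified, as an identity of the operators \emph{induced on} $X$ — equality of domains \emph{and} of actions — which is exactly the verification carried out above; the rest is routine manipulation of the maps $P_k$, $J_k$, $Q_k$.
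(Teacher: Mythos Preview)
Your proof is correct and, in substance, the same as the paper's: the paper simply cites Taylor--Lay for (a)$\Leftrightarrow$(c), declares (b)$\Leftrightarrow$(c) easy, and refers to Engel for \eqref{eqA2.1}, whereas you write out the elementary domain-bookkeeping explicitly via the cycle (a)$\Rightarrow$(b)$\Rightarrow$(c)$\Rightarrow$(a). Your closing remark on how \eqref{eqA2.1} must be read---as an equality of induced operators on $X$, not of individual entries---is a genuinely useful clarification that the paper leaves implicit.
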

\begin{proof}
First, the statements (a) and (c) are equivalent (see \cite[p. 287]{TL1980}) and,
moreover, one see easily that the statements (b) and (c) are equivalent.
In addition, one readily checks that \eqref{eqA2.1} holds if $\mathcal A$ has a matrix representation, see also \cite{Eng1998}.
\end{proof}

\begin{definition}
Let $\mathcal A=(A_{jk}), \mathcal B=(B_{jk})$ be block operator matrices on $X$.
We define the formal product block operator matrix of $\mathcal A$ and $\mathcal B$ as follows:
$$\mathcal A\times\mathcal B:=(\sum\limits_{k=1}^2 A_{jk}B_{kl}).$$
\end{definition}

\begin{theorem}\label{thA2.1}
Let $\mathcal A, \mathcal B$ be block operator matrices on $X$.
Then $\mathcal A\times\mathcal B=\mathcal A\mathcal B$ if and only if
$\mathcal A\mathcal B$ has a matrix representation.
\end{theorem}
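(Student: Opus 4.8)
The plan is to derive both implications from Lemma~\ref{lemA2.1}. Write $C_{jl}:=\sum_{k=1}^{2}A_{jk}B_{kl}$, so that, by the very definition of the formal product, $\mathcal A\times\mathcal B$ is the block operator matrix with entries $C_{jl}$ and hence automatically has a matrix representation. This settles the ``only if'' direction at once: if $\mathcal A\mathcal B=\mathcal A\times\mathcal B$, then $\mathcal A\mathcal B$ inherits that matrix representation.

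For the ``if'' direction, assume $\mathcal A\mathcal B$ has a matrix representation. By Lemma~\ref{lemA2.1} we may write $\mathcal A\mathcal B=(P_j\mathcal A\mathcal B J_l)$ in the sense of \eqref{eqA2.1}, so it is enough to identify the four entry operators $P_j\mathcal A\mathcal B J_l$ and check that the block operator matrix they induce coincides with $\mathcal A\times\mathcal B$, domains included. First I would unwind $\mathcal D(\mathcal A\mathcal B J_1)$: since $J_1x_1=(x_1\ 0)^t$, an element $x_1$ lies in this domain iff $(x_1\ 0)^t\in\mathcal D(\mathcal B)$ and $\mathcal B J_1x_1=(B_{11}x_1\ B_{21}x_1)^t\in\mathcal D(\mathcal A)$; substituting the product-domain descriptions of $\mathcal D(\mathcal B)$ and $\mathcal D(\mathcal A)$, this condition becomes $x_1\in\mathcal D(B_{11})\cap\mathcal D(B_{21})$, $B_{11}x_1\in\mathcal D(A_{11})\cap\mathcal D(A_{21})$, $B_{21}x_1\in\mathcal D(A_{12})\cap\mathcal D(A_{22})$, which is exactly $x_1\in\mathcal D(C_{11})\cap\mathcal D(C_{21})$. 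On this set $P_1\mathcal A\mathcal B J_1x_1=A_{11}B_{11}x_1+A_{12}B_{21}x_1=C_{11}x_1$, and similarly $P_2\mathcal A\mathcal B J_1=C_{21}$ on the same domain; the symmetric computation with $J_2$ shows $P_1\mathcal A\mathcal B J_2$, $P_2\mathcal A\mathcal B J_2$ act as $C_{12}$, $C_{22}$ on $\mathcal D(C_{12})\cap\mathcal D(C_{22})$.

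Then I would assemble these four operators into their induced block operator matrix. By the definition of the operator induced by a block operator matrix, its domain is $(\mathcal D(C_{11})\cap\mathcal D(C_{21}))\times(\mathcal D(C_{12})\cap\mathcal D(C_{22}))$, which is precisely $\mathcal D(\mathcal A\times\mathcal B)$, and on this domain it sends $(x_1\ x_2)^t$ to $(C_{11}x_1+C_{12}x_2\ \ \ C_{21}x_1+C_{22}x_2)^t=(\mathcal A\times\mathcal B)(x_1\ x_2)^t$. So $(P_j\mathcal A\mathcal B J_l)=\mathcal A\times\mathcal B$, and with Lemma~\ref{lemA2.1} this gives $\mathcal A\mathcal B=\mathcal A\times\mathcal B$, as desired.

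The step I expect to be the main obstacle is the domain bookkeeping. The natural guess $P_j\mathcal A\mathcal B J_l=C_{jl}$ is \emph{false on the nose}: for instance $P_1\mathcal A\mathcal B J_1$ has domain $\mathcal D(C_{11})\cap\mathcal D(C_{21})$, not $\mathcal D(C_{11})$, because requiring $\mathcal B J_1x_1\in\mathcal D(\mathcal A)$ pushes $B_{11}x_1$ into $\mathcal D(A_{11})\cap\mathcal D(A_{21})$ and $B_{21}x_1$ into $\mathcal D(A_{12})\cap\mathcal D(A_{22})$ --- i.e. into the whole first column of domains of $\mathcal A$. One therefore has to carry out the computation above carefully rather than assert the naive identity; the discrepancy washes out only because forming the induced block operator matrix re-intersects the column domains and so recovers $\mathcal D(\mathcal A\times\mathcal B)$. (It is also worth recording the always-valid inclusion $\mathcal A\times\mathcal B\subseteq\mathcal A\mathcal B$, obtained by the same kind of domain chase together with the fact that operator domains are linear subspaces; this makes clear that the theorem's content is really the reverse inclusion of domains under the matrix-representation hypothesis.)
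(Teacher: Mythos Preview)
Your proof is correct and is essentially the same argument as the paper's: both hinge on Lemma~\ref{lemA2.1} to pass from $x\in\mathcal D(\mathcal A\mathcal B)$ to its ``columns'' $J_lx_l\in\mathcal D(\mathcal A\mathcal B)$ and then unwind the resulting domain conditions to land in $\mathcal D(\mathcal A\times\mathcal B)$. The only cosmetic difference is packaging: the paper first records the always-valid inclusion $\mathcal A\times\mathcal B\subset\mathcal A\mathcal B$ and then uses Lemma~\ref{lemA2.1}(c) (the $Q_k$-invariance of $\mathcal D(\mathcal A\mathcal B)$) to obtain the reverse domain inclusion, whereas you invoke the representation formula \eqref{eqA2.1} and identify the entries $P_j\mathcal A\mathcal B J_l$ directly --- your careful remark that $P_j\mathcal A\mathcal B J_l$ has domain $\mathcal D(C_{1l})\cap\mathcal D(C_{2l})$ rather than $\mathcal D(C_{jl})$ is exactly the point where the two write-ups meet.
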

\begin{proof}
The ``only if" part is trivial.

The proof of the ``if" part. Assume $\mathcal A\mathcal B$ has a matrix representation.
Writing $\mathcal A=(A_{jk}), \mathcal B=(B_{jk})$
and $\mathcal D(\mathcal A)=\mathcal D_1\times\mathcal D_2$,
where $\mathcal D_k$ are subspaces of $X_k$ for $k=1,2$, respectively.
Then
\begin{align*}
&\mathcal D(\mathcal A\times\mathcal B)=\left\{\left(
      \begin{array}{c}
        x_1 \\
        x_2 \\
      \end{array}
    \right)\in\mathcal D(\mathcal B)~|~B_{11}x_1, B_{12}x_2\in\mathcal D_1, B_{21}x_1, B_{22}x_2\in\mathcal D_2\right\},\\
&\mathcal D(\mathcal A\mathcal B)=\left\{\left(
      \begin{array}{c}
        x_1 \\
        x_2 \\
      \end{array}
    \right)\in\mathcal D(\mathcal B)~|~B_{11}x_1+B_{12}x_2\in\mathcal D_1, B_{21}x_1+B_{22}x_2\in\mathcal D_2\right\},
\end{align*}
and so $\mathcal D(\mathcal A\times\mathcal B)\subset\mathcal D(\mathcal A\mathcal B)$.
One readily checks that $(\mathcal A\times\mathcal B)x=\mathcal A\mathcal B x$ for all $x\in\mathcal D(\mathcal A\times\mathcal B)$ which implies $\mathcal A\times\mathcal B\subset\mathcal A\mathcal B$.
It remains to show $\mathcal D(\mathcal A\mathcal B)\subset\mathcal D(\mathcal A\times\mathcal B)$.
If
$$x=\left(
       \begin{array}{c}
          x_1 \\
          x_2 \\
       \end{array}
    \right)\in\mathcal D(\mathcal A\mathcal B),$$
then by Lemma \ref{lemA2.1},
$$\left(
    \begin{array}{c}
      x_1 \\
      0 \\
    \end{array}
  \right), \left(
             \begin{array}{c}
               0 \\
               x_2 \\
             \end{array}
           \right)\in\mathcal D(\mathcal A\mathcal B),$$
and so from the structure of the set $\mathcal D(\mathcal A\mathcal B)$ we know that
\begin{align*}
&\left(
    \begin{array}{c}
      x_1 \\
      0 \\
    \end{array}
  \right)\in\mathcal D(\mathcal B),B_{11}x_1\in\mathcal D_1, B_{21}x_1\in\mathcal D_2,\\
&\left(
       \begin{array}{c}
               0 \\
               x_2 \\
             \end{array}
           \right)\in\mathcal D(\mathcal B),B_{12}x_2\in\mathcal D_1, B_{22}x_2\in\mathcal D_2,
\end{align*}
which imply $x\in\mathcal D(\mathcal A\times\mathcal B)$ by the structure of the set $\mathcal D(\mathcal A\times\mathcal B)$.
Hence $\mathcal D(\mathcal A\mathcal B)\subset\mathcal D(\mathcal A\times\mathcal B)$.
\end{proof}

The case $\mathcal A\mathcal B$ has no matrix representation can occur.

It follows from Theorem \ref{thA2.1} the \emph{Frobenius-Schur fractorization} of a block operator matrix.
\begin{corollary}(\cite[Section 2.2]{Tre})\label{corA2.1}
Let
\begin{align*}
\mathcal A:=\left(
    \begin{array}{cc}
      A & B \\
      C & D \\
    \end{array}
  \right)
\end{align*}
be a block operator matrix acting on $X$.
\begin{enumerate}
\item[(a)] Suppose that $D$ is closed with $\rho(D)\neq\emptyset$, and that $\mathcal D(D)\subset\mathcal D(B)$.
Then for some (and hence for all) $\lambda\in\rho(D)$,
\begin{align*}
\mathcal A-\lambda=&\left(
            \begin{array}{cc}
              I & B(D-\lambda)^{-1} \\
              0 & I \\
            \end{array}
          \right)\left(
                   \begin{array}{cc}
                     S_1(\lambda) & 0 \\
                     0 & D-\lambda \\
                   \end{array}
                 \right)\\
                 &\left(
                          \begin{array}{cc}
                            I & 0 \\
                            (D-\lambda)^{-1}C & I \\
                          \end{array}
                        \right),
\end{align*}
where $S_1(\lambda):=A-\lambda-B(D-\lambda)^{-1}C$ is the first Schur complement of $\mathcal A$ with domain $\mathcal D(S_1(\lambda))=\mathcal D(A)\cap\mathcal D(C)$.
\item[(b)] Suppose that $A$ is closed with $\rho(A)\neq\emptyset$, and that $\mathcal D(A)\subset\mathcal D(C)$.
Then for some (and hence for all) $\lambda\in\rho(A)$,
\begin{align*}
\mathcal A-\lambda=&\left(
            \begin{array}{cc}
              I & 0 \\
              C(A-\lambda)^{-1} & I \\
            \end{array}
          \right)\left(
                   \begin{array}{cc}
                     A-\lambda & 0 \\
                     0 & S_2(\lambda) \\
                   \end{array}
                 \right)\\
                 &\left(
                          \begin{array}{cc}
                            I & (A-\lambda)^{-1}B \\
                            0 & I \\
                          \end{array}
                        \right),
\end{align*}
where $S_2(\lambda):=D-\lambda-C(A-\lambda)^{-1}B$ is the second Schur complement of $\mathcal A$ with domain $\mathcal D(S_2(\lambda))=\mathcal D(B)\cap\mathcal D(D)$.
\end{enumerate}
\end{corollary}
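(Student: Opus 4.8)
The plan is to prove (a) by realising the right-hand side as a genuine composition of operators on $X$ and then stripping off the compositions one at a time with the help of Theorem \ref{thA2.1}, at each step replacing a product of two block operator matrices by the corresponding formal product, which can then be computed entrywise; part (b) is handled by the same argument with the two coordinates interchanged. Fix $\lambda\in\rho(D)$ and abbreviate $R:=(D-\lambda)^{-1}$; since $D$ is closed and $\lambda\in\rho(D)$, $R$ is bounded and everywhere defined on $X_2$, its range is $\mathcal D(D)\subset\mathcal D(B)$, and hence $BR$ is defined on all of $X_2$. Write the three factors as
\[
\mathcal L:=\begin{pmatrix} I & BR\\ 0 & I\end{pmatrix},\quad
\mathcal M:=\begin{pmatrix} S_1(\lambda) & 0\\ 0 & D-\lambda\end{pmatrix},\quad
\mathcal N:=\begin{pmatrix} I & 0\\ RC & I\end{pmatrix},
\]
so that the asserted identity reads $\mathcal A-\lambda=\mathcal L\mathcal M\mathcal N$ as operators on $X$; composition of operators is associative, so the bracketing does not matter. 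One records that $\mathcal D(\mathcal L)=X$, that $\mathcal D(\mathcal M)=\mathcal D(S_1(\lambda))\times\mathcal D(D)=(\mathcal D(A)\cap\mathcal D(C))\times\mathcal D(D)$ by hypothesis, that $\mathcal D(\mathcal N)=\mathcal D(C)\times X_2$, and that $\mathcal D(\mathcal A)=(\mathcal D(A)\cap\mathcal D(C))\times\mathcal D(D)$, the second factor collapsing because $\mathcal D(D)\subset\mathcal D(B)$.

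The first step is to identify $\mathcal M\mathcal N$. For $x=(x_1,\,x_2)^t\in\mathcal D(\mathcal N)$ one has $\mathcal N x=(x_1,\,RCx_1+x_2)^t$, and since $RCx_1\in\mathcal D(D)$ automatically, $\mathcal N x\in\mathcal D(\mathcal M)$ exactly when $x_1\in\mathcal D(A)\cap\mathcal D(C)$ and $x_2\in\mathcal D(D)$. Thus $\mathcal D(\mathcal M\mathcal N)=(\mathcal D(A)\cap\mathcal D(C))\times\mathcal D(D)$ is a Cartesian product, so $\mathcal M\mathcal N$ has a matrix representation by Lemma \ref{lemA2.1}, and Theorem \ref{thA2.1} gives $\mathcal M\mathcal N=\mathcal M\times\mathcal N=\begin{pmatrix} S_1(\lambda) & 0\\ C & D-\lambda\end{pmatrix}$, the lower-left entry being $(D-\lambda)RC=C$ on $\mathcal D(C)$. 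Next, as $\mathcal D(\mathcal L)=X$, the operator $\mathcal L(\mathcal M\mathcal N)$ has domain $\mathcal D(\mathcal M\mathcal N)=\mathcal D(\mathcal A)$, again a product; so $\mathcal L(\mathcal M\mathcal N)$ has a matrix representation, and Theorem \ref{thA2.1} gives $\mathcal L(\mathcal M\mathcal N)=\mathcal L\times(\mathcal M\mathcal N)=\begin{pmatrix} S_1(\lambda)+BRC & BR(D-\lambda)\\ C & D-\lambda\end{pmatrix}$. On $\mathcal D(\mathcal A)$ one has $S_1(\lambda)+BRC=A-\lambda$ (recall $S_1(\lambda)=A-\lambda-BRC$ with $\mathcal D(S_1(\lambda))=\mathcal D(A)\cap\mathcal D(C)$), and $BR(D-\lambda)$ agrees with $B$ on $\mathcal D(D)$, so this block operator matrix is exactly $\mathcal A-\lambda$. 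Since $\lambda\in\rho(D)$ was arbitrary, (a) follows.

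I expect the genuine obstacle to be bookkeeping rather than anything conceptual: one must never appeal to associativity (or commutativity) of the \emph{formal} product $\times$, which fails in general, but pass through Theorem \ref{thA2.1} at each stage; and one must keep the operator domains under strict control, using $\mathcal D(S_1(\lambda))=\mathcal D(A)\cap\mathcal D(C)$ (guaranteed by $\mathcal D(D)\subset\mathcal D(B)$) to see each intermediate domain as a Cartesian product and to read the partially cancelling sum $S_1(\lambda)+BRC$ as an operator with domain $\mathcal D(A)\cap\mathcal D(C)$ --- precisely the first component of $\mathcal D(\mathcal A)$ --- so that no domain is created or lost. Part (b) follows from the same chain of reductions applied to the factorisation $\mathcal A-\lambda=\mathcal L'\mathcal M'\mathcal N'$ with
\[
\mathcal L':=\begin{pmatrix} I & 0\\ C(A-\lambda)^{-1} & I\end{pmatrix},\quad
\mathcal M':=\begin{pmatrix} A-\lambda & 0\\ 0 & S_2(\lambda)\end{pmatrix},\quad
\mathcal N':=\begin{pmatrix} I & (A-\lambda)^{-1}B\\ 0 & I\end{pmatrix},
\]
this time using $A$ closed, $\rho(A)\neq\emptyset$, $\mathcal D(A)\subset\mathcal D(C)$, and $\mathcal D(S_2(\lambda))=\mathcal D(B)\cap\mathcal D(D)$ in place of the corresponding facts in (a).
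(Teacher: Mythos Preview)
Your argument is correct and follows the paper's own approach: verify that the operator products $\mathcal M\mathcal N$ and $\mathcal L(\mathcal M\mathcal N)$ have Cartesian-product domains, invoke Theorem~\ref{thA2.1} at each stage to replace the operator product by the formal product, and then read off the entries. The paper's proof is simply a compressed version of yours --- it records only that $\mathcal D(\mathcal R\mathcal S\mathcal T)=\mathcal D(\mathcal S\mathcal T)=(\mathcal D(A)\cap\mathcal D(C))\times\mathcal D(D)$ and concludes $\mathcal R\mathcal S\mathcal T=\mathcal R\times(\mathcal S\times\mathcal T)$ --- whereas you carry out the entrywise calculation explicitly.
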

\begin{proof}
To prove the first equality,
we denote by $\mathcal R\mathcal S\mathcal T$ the product of the three linear operators on the right side.
It is easy to see that
$$\mathcal D(\mathcal R\mathcal S\mathcal T)=\mathcal D(\mathcal S\mathcal T)=(\mathcal D(A)\cap\mathcal D(C))\times\mathcal D(D).$$
Hence, we have, by Theorem \ref{thA2.1},
$$\mathcal R\mathcal S\mathcal T=\mathcal R\times(\mathcal S\times\mathcal T),$$
this proved the first equality.
Similarly, the second equality holds.
\end{proof}

\begin{definition}
Let
$$\mathcal A=\left(
                \begin{array}{cc}
                  A & B \\
                  C & D \\
                \end{array}
              \right)$$
be a block operator matrix on $X$ with dense domain $\mathcal D_1\times\mathcal D_2$.
Then the block operator matrix
$$\mathcal A^\times:=\left(
                  \begin{array}{cc}
                    (A|_{\mathcal D_1})^* & (C|_{\mathcal D_1})^* \\
                    (B|_{\mathcal D_2})^* & (D|_{\mathcal D_2})^* \\
                  \end{array}
                \right)$$
is said to be the formal adjoint block operator matrix of $\mathcal A$.
\end{definition}

\begin{theorem}\label{thA2.2}
Let $\mathcal A$ be a block operator matrix on $X$ with dense domain.
Then $\mathcal A^\times=\mathcal A^*$ if and only if $\mathcal A^*$ has a matrix representation.
\end{theorem}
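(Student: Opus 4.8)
The plan is to follow the template of the proof of Theorem~\ref{thA2.1}, but with Lemma~\ref{lemA2.1} now invoked on the dual space $X^*$, identified as usual with $X_1^*\times X_2^*$. For the ``only if'' direction, observe that $\mathcal A^\times$ is, by construction, a block operator matrix, so the operator it induces has a domain of product form, namely $\bigl(\mathcal D((A|_{\mathcal D_1})^*)\cap\mathcal D((B|_{\mathcal D_2})^*)\bigr)\times\bigl(\mathcal D((C|_{\mathcal D_1})^*)\cap\mathcal D((D|_{\mathcal D_2})^*)\bigr)$; by the equivalence (a)$\Leftrightarrow$(b) of Lemma~\ref{lemA2.1} it therefore has a matrix representation, and hence so does $\mathcal A^*$ whenever $\mathcal A^\times=\mathcal A^*$.

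For the ``if'' direction, write $\mathcal A$ with entry operators $A,B,C,D$ and dense domain $\mathcal D_1\times\mathcal D_2$, noting that density of $\mathcal D(\mathcal A)$ in $X$ forces $\mathcal D_1$ dense in $X_1$ and $\mathcal D_2$ dense in $X_2$, so that the four restricted adjoints occurring in $\mathcal A^\times$ are single-valued. I would first prove the inclusion $\mathcal A^\times\subseteq\mathcal A^*$, which requires no hypothesis: expanding $(g,\mathcal Ax)$ via the pairing formula $(f,x)=(f_1,x_1)+(f_2,x_2)$ shows that any $(g_1\ g_2)^t\in\mathcal D(\mathcal A^\times)$ satisfies $(g,\mathcal Ax)=(\mathcal A^\times g,x)$ for all $x\in\mathcal D(\mathcal A)$, hence $(g_1\ g_2)^t\in\mathcal D(\mathcal A^*)$ with $\mathcal A^*g=\mathcal A^\times g$. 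It then remains to show $\mathcal D(\mathcal A^*)\subseteq\mathcal D(\mathcal A^\times)$ under the hypothesis. By Lemma~\ref{lemA2.1}(c) applied on $X^*$, the hypothesis that $\mathcal A^*$ has a matrix representation means $Q_1\mathcal D(\mathcal A^*)\subseteq\mathcal D(\mathcal A^*)$; so if $(g_1\ g_2)^t\in\mathcal D(\mathcal A^*)$ then both $(g_1\ 0)^t$ and $(0\ g_2)^t$ belong to $\mathcal D(\mathcal A^*)$. Evaluating the defining relation for $(g_1\ 0)^t\in\mathcal D(\mathcal A^*)$ at the vectors $(x_1\ 0)^t$, $x_1\in\mathcal D_1$, and at $(0\ x_2)^t$, $x_2\in\mathcal D_2$ — which is legitimate because $\mathcal D(\mathcal A)=\mathcal D_1\times\mathcal D_2$ splits as a product — gives $g_1\in\mathcal D((A|_{\mathcal D_1})^*)$ and $g_1\in\mathcal D((B|_{\mathcal D_2})^*)$; the same argument applied to $(0\ g_2)^t$ gives $g_2\in\mathcal D((C|_{\mathcal D_1})^*)\cap\mathcal D((D|_{\mathcal D_2})^*)$. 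Hence $(g_1\ g_2)^t\in\mathcal D(\mathcal A^\times)$, which together with $\mathcal A^\times\subseteq\mathcal A^*$ yields $\mathcal A^\times=\mathcal A^*$.

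The step needing the most care is this last one, and the point to get right is that \emph{both} ingredients genuinely enter: the matrix-representation hypothesis for $\mathcal A^*$ is what allows the decomposition $(g_1\ g_2)^t=(g_1\ 0)^t+(0\ g_2)^t$ inside $\mathcal D(\mathcal A^*)$, while the product structure of $\mathcal D(\mathcal A)$ is what decouples the resulting scalar identities into the four separate conditions defining membership in the domains of $(A|_{\mathcal D_1})^*$, $(B|_{\mathcal D_2})^*$, $(C|_{\mathcal D_1})^*$ and $(D|_{\mathcal D_2})^*$. One should also confirm that Lemma~\ref{lemA2.1} and the projections $Q_1,Q_2$ transfer verbatim to $X^*=X_1^*\times X_2^*$; this is immediate, since that lemma concerns only the lattice of domains relative to a product decomposition of the ambient space. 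Everything else reduces to bookkeeping with the pairing $(f,x)=(f_1,x_1)+(f_2,x_2)$.
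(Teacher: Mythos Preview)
Your proposal is correct and follows essentially the same approach as the paper's proof: both establish $\mathcal A^\times\subset\mathcal A^*$ unconditionally, then invoke Lemma~\ref{lemA2.1} to split $g=(g_1\ g_2)^t\in\mathcal D(\mathcal A^*)$ into $Q_1g,Q_2g\in\mathcal D(\mathcal A^*)$, and finally test the adjoint relation against vectors $(x_1\ 0)^t$ and $(0\ x_2)^t$ to decouple into the four domain conditions defining $\mathcal D(\mathcal A^\times)$. The paper treats the ``only if'' part as immediate and omits it; your explicit remark that $\mathcal A^\times$ has product domain by construction is a harmless addition.
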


\begin{proof}
We only need to prove the ``if" part. Assume $\mathcal A^*$ has a matrix representation.
Writing
$$\mathcal A=\left(
                   \begin{array}{cc}
                     A & B \\
                     C & D \\
                   \end{array}
                 \right)$$
and $\mathcal D(\mathcal A)=\mathcal D_1\times\mathcal D_2$.
Then it follows from
$(f,\mathcal A x)=(\mathcal A^\times f, x)$ for all $x\in\mathcal D(\mathcal A)$ and all $f\in\mathcal D(\mathcal A^\times)$
that $\mathcal A^\times\subset\mathcal A^*$ (see also \cite{MS2008}).
It remains to show $\mathcal D(\mathcal A^*)\subset\mathcal D(\mathcal A^\times)$.
Let
$$g=\left(
\begin{array}{c}
g_1 \\
g_2 \\
\end{array}
\right)\in \mathcal D(\mathcal A^*).$$
By Lemma \ref{lemA2.1} we have $Q_kg\in\mathcal D(\mathcal A^*)$ for $k=1, 2$,
so that
\begin{equation}\label{eqA2.2}
(Q_kg, \mathcal A y)=(\mathcal A^* Q_kg, y) \mbox{~for~} y\in\mathcal D(\mathcal A), k=1,2.
\end{equation}
Writing
$$\mathcal A^*Q_kg=\left(
            \begin{array}{c}
              h_{k1} \\
              h_{k2} \\
            \end{array}
          \right), k=1,2.$$
By taking $k=1$ in \eqref{eqA2.2} we get
\begin{equation*}
(g_1,A|_{\mathcal D_1}y_1)+(g_1,B|_{\mathcal D_1}y_2)=(h_{11},y_1)+(h_{12},y_2),
          \left(
            \begin{array}{c}
              y_1 \\
              y_2 \\
            \end{array}
          \right)\in \mathcal D(\mathcal A).
\end{equation*}
It follows that
\begin{align*}
(g_1,A|_{\mathcal D_1}y_1)=(h_{11},y_1) \mbox{~for~} y_1\in \mathcal D(A|_{\mathcal D_1}),\\
(g_1,B|_{\mathcal D_1}y_2)=(h_{12},y_2) \mbox{~for~} y_2\in \mathcal D(B|_{\mathcal D_1}),
\end{align*}
so that
\begin{align}\label{eqA2.3}
g_1\in \mathcal D((A|_{\mathcal D_1})^*)\cap\mathcal D((B|_{\mathcal D_1})^*).
\end{align}
Similarly, by taking $k=2$ in \eqref{eqA2.2} we get
\begin{align}\label{eqA2.4}
g_2\in \mathcal D((C|_{\mathcal D_1})^*)\cap\mathcal D((D|_{\mathcal D_1})^*).
\end{align}
From \eqref{eqA2.3} and \eqref{eqA2.4} we get
$g\in\mathcal D(\mathcal A^\times)$,
so that $\mathcal D(\mathcal A^*)\subset\mathcal D(\mathcal A^\times)$.
\end{proof}

Obviously, $\left(
       \begin{array}{cc}
         A^* & C^* \\
         B^* & D^* \\
       \end{array}
     \right)
\subset\mathcal A^\times\subset\mathcal A^*$.
Moreover, there is a block operator matrix $\mathcal A$ such that
$\mathcal A^\times=\mathcal A^*$
but $\left(
       \begin{array}{cc}
         A^* & C^* \\
         B^* & D^* \\
       \end{array}
     \right)
\neq\mathcal A^*$,
see the following example (for the notions of differential operators see \cite{Nai1968}).

\begin{example}\label{exA2.1}
Given the following four differential operators on the Hilbert space $L^2(0,1)$:
\begin{align*}
&\mathcal D(L):=H^2(0,1), Lf:=-f'',\\
&\mathcal D(L_0):=\{f\in\mathcal D(L)~|~f^{(k)}(0)=f^{(k)}(1)=0, k=0, 1\}, L_0:=L|_{\mathcal D(L_0)},\\
&\mathcal D(L_D):=\{f\in\mathcal D(L)~|~f(0)=f(1)=0\}, L_D:=L|_{\mathcal D(L_D)},\\
&\mathcal D(L_N):=\{f\in\mathcal D(L)~|~f'(0)=f'(1)=0\}, L_N:=L|_{\mathcal D(L_N)}.
\end{align*}

It is well known that $L_0$ is closed, $L=L_0^*, L_D=L_D^*$, and $L_N=L_N^*$.
For the block operator matrix
$$\mathcal L:=\left(
      \begin{array}{cc}
        L_D & L_N \\
        L_N & -L_D \\
      \end{array}
    \right)$$
defined on the Hilbert space $L^2(0,1)\times L^2(0,1)$, we have
\begin{enumerate}
\item[(a)] $\mathcal L$ is closed,
\item[(b)] $\mathcal L=\left(
      \begin{array}{cc}
        L_D^* & L_N^* \\
        L_N^* & -L_D^* \\
      \end{array}
    \right)\subsetneq\mathcal L^\times=\mathcal L^*$.
\end{enumerate}
In fact, we see from $\mathcal D(L_0)=\mathcal D(L_D)\cap\mathcal D(L_N)$ that
$$\mathcal L=\left(
      \begin{array}{cc}
        L_0 & L_0 \\
        L_0 & -L_0 \\
      \end{array}
    \right)=\left(
        \begin{array}{cc}
          I & I \\
          I & -I \\
        \end{array}
      \right)\left(
               \begin{array}{cc}
                 L_0 & 0 \\
                 0 & L_0 \\
               \end{array}
             \right),$$
where the latter equality follows from Theorem \ref{thA2.1}.
By Lemma \ref{lemA1},
\begin{equation}\label{eqA2.5}
\mathcal L^*=\left(
               \begin{array}{cc}
                 L & 0 \\
                 0 & L \\
               \end{array}
             \right)\left(
        \begin{array}{cc}
          I & I \\
          I & -I \\
        \end{array}
      \right),
\end{equation}
which implies $\mathcal D(\mathcal L^*)=\mathcal D(L)\times\mathcal D(L)$.
Thus, by Theorem \ref{thA2.1},
$$\mathcal L^*=\left(
      \begin{array}{cc}
        L & L \\
        L & -L \\
      \end{array}
    \right)=\mathcal L^\times.$$
But
$$\left(
      \begin{array}{cc}
        L_D^* & L_N^* \\
        L_N^* & -L_D^* \\
      \end{array}
    \right)=\mathcal L\subsetneq\mathcal L^*.$$
It remains to prove $\mathcal L$ is closed.
Since
$\frac{1}{\sqrt{2}}\left(
   \begin{array}{cc}
     I & I \\
     I & -I \\
   \end{array}
 \right)$ is a self-adjoint unitary operator, we have, by applying Lemma \ref{lemA2} to \eqref{eqA2.5},
$$\mathcal L^{**}=\left(
   \begin{array}{cc}
     I & I \\
     I & -I \\
   \end{array}
 \right)\left(
               \begin{array}{cc}
                 L^* & 0 \\
                 0 & L^* \\
               \end{array}
             \right)=\mathcal L.$$
\end{example}

If a densely defined operator $\mathcal A$ has a matrix representation,
this need not be true for $\mathcal A^*$ even if $\mathcal A$ is closed; see the following example.
\begin{example}
Let $X_1=X_2$ and let $A$ be a closed densely defined operator on $X_1$ with $\mathcal D(A)\neq X_1$.
Consider the block operator matrix
$$\mathcal A:=\left(
      \begin{array}{cc}
        A & 0 \\
        A & 0 \\
      \end{array}
    \right)$$
on $X_1\times X_1$. It is easy to verify that $\mathcal A$ is a closed operator
with domain $\mathcal D(\mathcal A)=\mathcal D(A)\times X_1$.
Furthermore, we see from Theorem \ref{thA2.1} that
$$\mathcal A=\left(
      \begin{array}{cc}
        I & 0 \\
        I & 0 \\
      \end{array}
    \right)\left(
             \begin{array}{cc}
               A & 0 \\
               0 & 0 \\
             \end{array}
           \right).$$
By Lemma \ref{lemA1},
$$\mathcal A^*=\left(
             \begin{array}{cc}
               A^* & 0 \\
               0 & 0 \\
             \end{array}
           \right)\left(
      \begin{array}{cc}
        I & I \\
        0 & 0 \\
      \end{array}
    \right),$$
so that
\begin{align*}
\mathcal D(\mathcal A^*)=\left\{\left(
      \begin{array}{c}
        x_1 \\
        x_2 \\
      \end{array}
    \right)\in X_1\times X_1~|~x_1+x_2\in\mathcal D(A^*)\right\}.
\end{align*}
Since $A$ is unbounded, we have $\mathcal D(A^*)\neq X_1$.
Taking $x_1\in X_1\setminus\mathcal D(A^*)$, then
$$\left(
    \begin{array}{c}
      x_1 \\
      -x_1 \\
    \end{array}
  \right)\in\mathcal D(\mathcal A^*)\mbox{~but~}
  \left(
    \begin{array}{c}
      x_1 \\
      0 \\
    \end{array}
  \right)\not\in\mathcal D(\mathcal A^*).$$
Hence, by Lemma \ref{lemA2.1}, $\mathcal A^*$ has no block operator matrix representation.
\end{example}

\section{self-adjointness}
Let $H_1, H_2$ be complex Hilbert spaces.
Now we consider self-adjointness of block operator matrices with unbounded entries acting on the Hilbert space $H_1\times H_2$.

First we shall consider necessary conditions for a block operator matrix to be self-adjoint.
Let
$\mathcal A:=\left(
   \begin{array}{cc}
     A & B \\
     C & D \\
   \end{array}
 \right)$
be a block operator matrix acting on $H_1\times H_2$ with dense domain $\mathcal D_1\times\mathcal D_2$.
Clearly, $\mathcal A$ is symmetric if and only if
\begin{equation}\label{eqA3.1}
A|_{\mathcal D_1}\subset(A|_{\mathcal D_1})^*, B|_{\mathcal D_2}\subset(C|_{\mathcal D_1})^*, D|_{\mathcal D_2}\subset(D|_{\mathcal D_2})^*.
\end{equation}
Furthermore, it follows from Theorem \ref{thA2.2} and \eqref{eqA3.1} the following assertion.
\begin{proposition}
If $\mathcal A$ is self-adjoint, then
$$\mathcal A=\left(
                \begin{array}{cc}
                  \overline{A|_{\mathcal D_1}} & \overline{B|_{\mathcal D_2}} \\
                  \overline{C|_{\mathcal D_1}} & \overline{D|_{\mathcal D_2}} \\
                \end{array}
              \right)=\mathcal A^\times.$$
\end{proposition}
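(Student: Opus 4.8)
The plan is to squeeze $\mathcal A$ between itself and its formal adjoint. Since $\mathcal A$ is self-adjoint, $\mathcal A^{*}=\mathcal A$ certainly has a matrix representation, so Theorem~\ref{thA2.2} applies and gives $\mathcal A^{\times}=\mathcal A^{*}=\mathcal A$. Abbreviate the restricted blocks by $a:=A|_{\mathcal D_1}$, $b:=B|_{\mathcal D_2}$, $c:=C|_{\mathcal D_1}$, $d:=D|_{\mathcal D_2}$; these are densely defined because density of $\mathcal D_1\times\mathcal D_2$ in $H_1\times H_2$ forces $\mathcal D_1$ dense in $H_1$ and $\mathcal D_2$ dense in $H_2$, and one has $\mathcal A=\left(\begin{smallmatrix}a&b\\ c&d\end{smallmatrix}\right)$ and $\mathcal A^{\times}=\left(\begin{smallmatrix}a^{*}&c^{*}\\ b^{*}&d^{*}\end{smallmatrix}\right)$ exactly. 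Let $\mathcal B$ denote the block operator matrix with entries $\overline a,\overline b$ in the first row and $\overline c,\overline d$ in the second. Since we already know $\mathcal A^{\times}=\mathcal A$, it suffices to prove $\mathcal A\subset\mathcal B\subset\mathcal A^{\times}$.

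The inclusion $\mathcal A\subset\mathcal B$ is immediate: each closure extends the corresponding block ($a\subset\overline a$, and so on), and a quick look at the induced-operator domain shows $\mathcal D(\mathcal A)=(\mathcal D(a)\cap\mathcal D(c))\times(\mathcal D(b)\cap\mathcal D(d))\subset(\mathcal D(\overline a)\cap\mathcal D(\overline c))\times(\mathcal D(\overline b)\cap\mathcal D(\overline d))=\mathcal D(\mathcal B)$, the two operators agreeing on $\mathcal D(\mathcal A)$. For $\mathcal B\subset\mathcal A^{\times}$ I would invoke symmetry: self-adjointness forces $\mathcal A$ to be symmetric, so \eqref{eqA3.1} holds, i.e. $a\subset a^{*}$, $b\subset c^{*}$, $d\subset d^{*}$, and its middle condition is symmetric in $b,c$, so also $c\subset b^{*}$. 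Each of the four adjoints $a^{*},b^{*},c^{*},d^{*}$ is a closed operator containing one of the blocks, hence each block is closable and its closure is contained in that adjoint: $\overline a\subset a^{*}$, $\overline b\subset c^{*}$, $\overline c\subset b^{*}$, $\overline d\subset d^{*}$, which match precisely the (transposed) off-diagonal slots of $\mathcal A^{\times}$. Entrywise these inclusions, together with the same routine domain bookkeeping as above, give $\mathcal B\subset\mathcal A^{\times}$. Chaining, $\mathcal A\subset\mathcal B\subset\mathcal A^{\times}=\mathcal A$, so all three coincide, which is the claim.

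The only point requiring care is the passage from \eqref{eqA3.1} to the four closure inclusions: one must use that the single condition $B|_{\mathcal D_2}\subset(C|_{\mathcal D_1})^{*}$ also yields $C|_{\mathcal D_1}\subset(B|_{\mathcal D_2})^{*}$, so that both off-diagonal blocks possess closed extensions and are therefore closable; closedness of $\mathcal A$ by itself would not even guarantee this. The remaining ingredient is purely bookkeeping: an entrywise inclusion of block operator matrices induces an inclusion of the associated operators on $H_1\times H_2$, which is exactly the structure already packaged in Lemma~\ref{lemA2.1} and the definition of the induced operator. I expect no genuine obstacle beyond keeping the four blocks and their transposed adjoint positions straight.
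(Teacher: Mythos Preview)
Your proof is correct and follows exactly the approach the paper indicates: the paper does not spell out a proof but simply states that the proposition follows from Theorem~\ref{thA2.2} and \eqref{eqA3.1}, and you have unpacked precisely that argument---first using Theorem~\ref{thA2.2} to get $\mathcal A^\times=\mathcal A$, then squeezing the closure matrix between $\mathcal A$ and $\mathcal A^\times$ via the symmetry relations \eqref{eqA3.1}. Your care in deriving $c\subset b^{*}$ from $b\subset c^{*}$ (so that all four blocks are closable) is the one nontrivial point, and you handle it correctly.
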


In addition,
we point out that the entry operators of a self-adjoint block operator matrix need not be closed, see the following example.

\begin{example}\label{exmA3.1}
Let $X:=(L^2(0,1)\times L^2(0,1))\times (L^2(0,1)\times L^2(0,1))$
Let $M$ be the differential operator on the Hilbert space $L^2(0,1)$ which is defined by
$$\mathcal D(M):=\{f\in H^1(0,1)~|~f(0)=f(1)=0\}, Mf:=if'.$$
From the methods of differential operators we know that $M$ is closed, $C_c^\infty(0,1)$ is a core of $M$,
and $M^*$ is determined by
$$\mathcal D(M^*):=H^1(0,1), M^*f:=if'.$$
Let $L_D$ be the same as in Example \ref{exA2.1} and let $M_0:=M|_{\mathcal D(L_D)}$.
For the block operator matrix
$$\mathcal A:=\left(
      \begin{array}{cccc}
        L_D & 0 & M_0 & 0 \\
        0 & M_0 & 0 & L_D \\
        M_0 & 0 & L_D & 0 \\
        0 & L_D & 0 & M_0 \\
      \end{array}
    \right)=:\left(
               \begin{array}{cc}
                 A & B \\
                 B & A \\
               \end{array}
             \right)$$
on the Hilbert space $X$, we claim that
\begin{enumerate}
\item[(a)] $\mathcal A$ is self-adjoint,
\item[(b)] $A, B$ are not closed and $A\subsetneq A^*, B\subsetneq B^*$.
\end{enumerate}
In fact, it is easy to see that
$$\mathcal A=\left(
      \begin{array}{cccc}
        I & 0 & 0 & 0 \\
        0 & 0 & I & 0 \\
        0 & I & 0 & 0 \\
        0 & 0 & 0 & I \\
      \end{array}
    \right)\left(
             \begin{array}{cccc}
               L_D & M_0 & 0 & 0 \\
               M_0 & L_D & 0 & 0 \\
               0 & 0 & M_0 & L_D \\
               0 & 0 & L_D & M_0 \\
             \end{array}
           \right)\left(
      \begin{array}{cccc}
        I & 0 & 0 & 0 \\
        0 & 0 & I & 0 \\
        0 & I & 0 & 0 \\
        0 & 0 & 0 & I \\
      \end{array}
    \right)=:\mathcal E\mathcal B\mathcal E.$$
By interpolation theorem of Sobolev spaces (see \cite[Theorem 5.2]{AF2003}) and Lemma \ref{lemA3}, the operators
$\left(
             \begin{array}{cc}
               M_0 & L_D \\
               L_D & M_0 \\
             \end{array}
           \right)\mbox{~and~}
\left(
    \begin{array}{cc}
      L_D & M_0 \\
      M_0 & L_D \\
    \end{array}
  \right)$
are self-adjoint. Hence, $\mathcal B$ is self-adjoint (see \cite[Proposition 2.6.3]{Tre}).
Since $\mathcal E^*=\mathcal E^{-1}=\mathcal E$,
it follows from Lemma \ref{lemA2} and Lemma \ref{lemA3} that $\mathcal A$ is self-adjoint.
This proved the first claim.
The second claim follows from the following four equalities:
\begin{align*}
A^*=\left(
      \begin{array}{cc}
        L_D & 0 \\
        0 & M^* \\
      \end{array}
    \right), \ \overline{A}=\left(
                            \begin{array}{cc}
                              L_D & 0 \\
                              0 & M \\
                            \end{array}
                          \right),\\
B^*=\left(
      \begin{array}{cc}
        M^* & 0 \\
        0 & L_D \\
      \end{array}
    \right), \ \overline{B}=\left(
                            \begin{array}{cc}
                              M & 0 \\
                              0 & L_D \\
                            \end{array}
                          \right).
\end{align*}
\end{example}

Next we consider sufficient conditions for a block operator matrix to be (essentially) self-adjoint.
In view of \eqref{eqA3.1} and Example \ref{exmA3.1},
through out the rest of this section we make the following \emph{basic assumptions}:
\begin{enumerate}
\item[(i)] $A, B, C, D$ are densely defined and closable,
\item[(ii)] $A\subset A^*, B\subset C^*, D\subset D^*$,
\item[(iii)] $\mathcal A:=\left(
                \begin{array}{cc}
                  A & B \\
                  C & D \\
                \end{array}
              \right)$ is densely defined on $H_1\times H_2$.
\end{enumerate}
Further assumptions will be formulated where they are needed.

\begin{proposition}
$\mathcal A$ is self-adjoint if one of the following statements holds:
\begin{enumerate}
\item[(a)] $A, D$ are self-adjoint, $C$ is $A$-bounded with relative bound $<1$, and $B$ is $D$-bounded with relative bound $<1$.
\item[(b)] $B$ is closed, $C=B^*$, $A$ is $C$-bounded with relative bound $<1$, and $D$ is $B$-bounded with relative bound $<1$.
\end{enumerate}
\end{proposition}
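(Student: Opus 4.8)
The plan is to reduce both cases to the classical Kato--Rellich perturbation theorem by writing $\mathcal A$ as a sum of a self-adjoint ``diagonal'' (or ``anti-diagonal'') block operator matrix plus a relatively bounded symmetric perturbation, and then checking that the relative bound on the block matrix level is still $<1$. For part (a), set $\mathcal D:=\left(\begin{smallmatrix} A & 0 \\ 0 & D \end{smallmatrix}\right)$ and $\mathcal P:=\left(\begin{smallmatrix} 0 & B \\ C & 0 \end{smallmatrix}\right)$. Since $A$ and $D$ are self-adjoint, $\mathcal D$ is self-adjoint on $\mathcal D(A)\times\mathcal D(D)$ (this is where Theorem~\ref{thA2.1} enters, to see that the diagonal matrix really is the operator orthogonal sum and that $\mathcal D(\mathcal D)=\mathcal D(A)\times\mathcal D(D)\subset\mathcal D(\mathcal P)$ by the relative boundedness hypotheses). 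Using the basic assumptions (ii), $\mathcal P$ is symmetric, because $B\subset C^*$ forces $C\subset B^*$ as well, so $\mathcal P^\times=\left(\begin{smallmatrix} 0 & C^* \\ B^* & 0 \end{smallmatrix}\right)\supset \mathcal P$. The first main step is therefore purely bookkeeping with the definitions and with Theorem~\ref{thA2.1}/\ref{thA2.2}: identify $\mathcal A=\mathcal D+\mathcal P$ as operators (same domain, same action).

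The second and genuinely quantitative step is to estimate $\|\mathcal P x\|$ against $\|\mathcal D x\|$ and $\|x\|$ for $x=(x_1\ x_2)^t\in\mathcal D(\mathcal D)$. Write $a,b<1$ for the relative bounds and pick $a<a'<1$, $b<b'<1$; there are constants $\alpha,\beta$ with $\|Cx_1\|\le a'\|Ax_1\|+\alpha\|x_1\|$ and $\|Bx_2\|\le b'\|Dx_2\|+\beta\|x_2\|$. Then
\begin{align*}
\|\mathcal P x\|^2 &= \|Bx_2\|^2+\|Cx_1\|^2 \\
&\le (b'\|Dx_2\|+\beta\|x_2\|)^2+(a'\|Ax_1\|+\alpha\|x_1\|)^2.
\end{align*}
Expanding and using $\|Ax_1\|^2+\|Dx_2\|^2=\|\mathcal D x\|^2$, $\|x_1\|^2+\|x_2\|^2=\|x\|^2$, together with $2st\le \varepsilon s^2+\varepsilon^{-1}t^2$ to absorb the cross terms, one gets $\|\mathcal P x\|\le c\|\mathcal D x\|+c'\|x\|$ with $c$ any number strictly between $\max\{a',b'\}$ and $1$ (choosing $\varepsilon$ small); in particular $c<1$. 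Hence $\mathcal P$ is $\mathcal D$-bounded with relative bound $<1$, and Kato--Rellich gives that $\mathcal A=\mathcal D+\mathcal P$ is self-adjoint on $\mathcal D(\mathcal D)$.

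For part (b) the only change is the choice of the unperturbed operator: take $\mathcal D:=\left(\begin{smallmatrix} 0 & B \\ C & 0 \end{smallmatrix}\right)=\left(\begin{smallmatrix} 0 & B \\ B^* & 0 \end{smallmatrix}\right)$, which is self-adjoint because $B$ is closed and densely defined (its self-adjointness is the standard fact that $\left(\begin{smallmatrix} 0 & B \\ B^* & 0 \end{smallmatrix}\right)^* = \left(\begin{smallmatrix} 0 & B^{**} \\ B^* & 0 \end{smallmatrix}\right)=\left(\begin{smallmatrix} 0 & B \\ B^* & 0 \end{smallmatrix}\right)$, using Theorem~\ref{thA2.2} to compute the adjoint blockwise and $\mathcal D(\mathcal D)=\mathcal D(C)\times\mathcal D(B)$); and $\mathcal P:=\left(\begin{smallmatrix} A & 0 \\ 0 & D \end{smallmatrix}\right)$ is symmetric by assumption (ii). Note $\|\mathcal D x\|^2=\|Bx_2\|^2+\|Cx_1\|^2$ and $\|\mathcal P x\|^2=\|Ax_1\|^2+\|Dx_2\|^2$, so the hypotheses ``$A$ is $C$-bounded with relative bound $<1$'' and ``$D$ is $B$-bounded with relative bound $<1$'' translate, by the identical two-term estimate as above, into $\mathcal P$ being $\mathcal D$-bounded with relative bound $<1$; Kato--Rellich again finishes. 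The main obstacle, and the only place requiring care, is the quantitative step: getting the relative bound of the block perturbation to come out $<1$ rather than merely finite, which forces the $2st\le\varepsilon s^2+\varepsilon^{-1}t^2$ splitting with $\varepsilon$ chosen after $a',b'$; the domain-identification step is routine once Theorems~\ref{thA2.1} and~\ref{thA2.2} are invoked.
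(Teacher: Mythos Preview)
Your proposal is correct and follows essentially the same route as the paper: split $\mathcal A$ into a self-adjoint part $\mathcal S$ (the diagonal in case (a), the anti-diagonal $\left(\begin{smallmatrix}0&B\\B^*&0\end{smallmatrix}\right)$ in case (b)) plus a symmetric perturbation $\mathcal T$, check that $\mathcal T$ is $\mathcal S$-bounded with relative bound $<1$, and conclude by Kato--Rellich. The paper's proof is terser---it cites \cite[Proposition~2.6.3]{Tre} for the self-adjointness/symmetry of the pieces and Lemma~\ref{lemA3} (Hess--Kato) in place of Kato--Rellich, and simply asserts the relative bound $<1$ without writing out the estimate---but the underlying argument is identical to yours. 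Two small remarks: your invocation of Theorems~\ref{thA2.1} and~\ref{thA2.2} for the domain and adjoint computations is more than is needed (the relevant facts are elementary or in \cite[Proposition~2.6.3]{Tre}); and your $\varepsilon$-splitting can be replaced by the cleaner Cauchy--Schwarz step $(cs+\gamma t)^2+(cu+\gamma v)^2\le(c\sqrt{s^2+u^2}+\gamma\sqrt{t^2+v^2})^2$, which yields $\|\mathcal T x\|\le \max\{a',b'\}\,\|\mathcal S x\|+\gamma\|x\|$ directly.
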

\begin{proof}
We prove the claim in case (a); the proof in case (b) is analogous.
Writing
$$\mathcal A=\left(
                \begin{array}{cc}
                  A & 0 \\
                  0 & D \\
                \end{array}
              \right)+\left(
                        \begin{array}{cc}
                          0 & B \\
                          C & 0 \\
                        \end{array}
                      \right)=:\mathcal S+\mathcal T.$$
Then $\mathcal S$ is self-adjoint and $\mathcal T$ is symmetric (see \cite[Proposition 2.6.3]{Tre}).
Furthermore, by the assumptions $\mathcal T$ is $\mathcal S$-bounded with relative bound $<1$.
By applying Lemma \ref{lemA3} to $\mathcal S, \mathcal T$, we complete the proof.
\end{proof}

\begin{proposition}
$\mathcal A$ is essentially self-adjoint if one of the following statements holds:
\begin{enumerate}
\item[(a)] $A, D$ are self-adjoint, and for some $a\geq 0$,
\begin{align*}
\|Cx\|^2\leq a\|x\|^2+\|Ax\|^2, x\in\mathcal D(A),\\
\|By\|^2\leq a\|y\|^2+\|Dy\|^2, y\in\mathcal D(D).
\end{align*}
\item[(b)] $B$ is closed, $C=B^*$, and for some $a\geq 0$,
\begin{align*}
\|Ax\|^2\leq a\|x\|^2+\|Cx\|^2, x\in\mathcal D(C),\\
\|Dy\|^2\leq a\|y\|^2+\|By\|^2, y\in\mathcal D(B).
\end{align*}
\end{enumerate}
\end{proposition}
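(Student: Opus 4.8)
The plan is to follow the same splitting idea as in the preceding proposition, but to replace the Kato--Rellich step by its Wüst refinement, since here the relative bound is only required to be $\le 1$ rather than $<1$; so the quoted Lemma \ref{lemA3} must be available in (or strengthened to) the essential--self-adjointness version of the perturbation theorem.

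In case (a) I would write $\mathcal A=\mathcal S+\mathcal T$ with
$$\mathcal S:=\left(\begin{array}{cc} A & 0 \\ 0 & D \end{array}\right),\qquad \mathcal T:=\left(\begin{array}{cc} 0 & B \\ C & 0 \end{array}\right).$$
The two displayed inequalities implicitly require $\mathcal D(A)\subset\mathcal D(C)$ and $\mathcal D(D)\subset\mathcal D(B)$, so $\mathcal D(\mathcal A)=\mathcal D(A)\times\mathcal D(D)=\mathcal D(\mathcal S)$ and $\mathcal A=\mathcal S+\mathcal T$ genuinely holds as operators on $H_1\times H_2$; moreover $\mathcal S$ is self-adjoint and $\mathcal T$ is symmetric, exactly as in the previous proof (cf. \cite[Proposition 2.6.3]{Tre}). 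Then for $u=(x\ y)^t\in\mathcal D(\mathcal S)$ I would simply add the two hypotheses:
$$\|\mathcal T u\|^{2}=\|Cx\|^{2}+\|By\|^{2}\le a\bigl(\|x\|^{2}+\|y\|^{2}\bigr)+\|Ax\|^{2}+\|Dy\|^{2}=a\|u\|^{2}+\|\mathcal S u\|^{2},$$
hence $\|\mathcal T u\|\le\|\mathcal S u\|+\sqrt{a}\,\|u\|$, i.e. $\mathcal T$ is $\mathcal S$-bounded with relative bound $\le 1$. Applying Lemma \ref{lemA3} in its Wüst form to the self-adjoint $\mathcal S$ and the symmetric $\mathcal T$ (note $\mathcal D(\mathcal S)\subset\mathcal D(\mathcal T)$ by the inclusions above) then gives that $\mathcal A$ is essentially self-adjoint.

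For case (b) I would interchange the roles of the diagonal and off-diagonal parts: now $\mathcal T=\left(\begin{array}{cc} 0 & B \\ C & 0\end{array}\right)$ is self-adjoint on $\mathcal D(B^{*})\times\mathcal D(B)$ because $B$ is closed and $C=B^{*}$ (the standard description of the adjoint of an off-diagonal block operator matrix: any $(f\ g)^t$ in the domain of its adjoint satisfies $g\in\mathcal D(B^{**})=\mathcal D(B)$ and $f\in\mathcal D(B^{*})$), while $\mathcal S=\left(\begin{array}{cc} A & 0 \\ 0 & D\end{array}\right)$ is merely symmetric by the basic assumptions $A\subset A^{*}$, $D\subset D^{*}$. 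The hypotheses now force $\mathcal D(C)\subset\mathcal D(A)$ and $\mathcal D(B)\subset\mathcal D(D)$, so $\mathcal D(\mathcal A)=\mathcal D(C)\times\mathcal D(B)=\mathcal D(\mathcal T)\subset\mathcal D(\mathcal S)$ and $\mathcal A=\mathcal T+\mathcal S$. Summing the two inequalities gives, for $u=(x\ y)^t\in\mathcal D(\mathcal T)$, that $\|\mathcal S u\|^{2}\le a\|u\|^{2}+\|\mathcal T u\|^{2}$, so $\mathcal S$ is $\mathcal T$-bounded with relative bound $\le 1$, and Lemma \ref{lemA3} applied to the self-adjoint $\mathcal T$ and the symmetric $\mathcal S$ finishes the proof.

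The only genuine obstacle is precisely that the relative bound here equals $1$, so the Kato--Rellich theorem used for the previous proposition does not suffice and one must invoke Wüst's perturbation theorem. Everything else is bookkeeping: reading the domain inclusions off the hypotheses so that $\mathcal A$ really coincides with the perturbed sum, recalling that block operator matrices built from self-adjoint diagonal entries (resp. from a closed $B$ with $C=B^{*}$ placed off the diagonal) are self-adjoint, and adding the two scalar estimates.
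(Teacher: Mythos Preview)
Your proposal is correct and follows essentially the same route as the paper: split $\mathcal A$ into a self-adjoint part and a symmetric perturbation, add the two scalar estimates to get relative bound $\le 1$, and invoke W\"ust's theorem. The only cosmetic point is that the paper cites W\"ust's theorem directly (\cite[Theorem 4]{W1972}) rather than through Lemma~\ref{lemA3}, which as stated is the Hess--Kato result and does not cover the borderline case; your parenthetical remark that Lemma~\ref{lemA3} ``must be available in (or strengthened to)'' the W\"ust version already acknowledges this.
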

\begin{proof}
We prove e.g. case (a).
By the assumptions, we have, for all $(x,\ y)^t\in\mathcal D(A)\times\mathcal D(D)$,
$$\left\|\left(
      \begin{array}{cc}
        0 & B \\
        C & 0 \\
      \end{array}
    \right)\left(
             \begin{array}{c}
               x \\
               y \\
             \end{array}
           \right)
\right\|\leq \sqrt{a}\left\|\left(
                   \begin{array}{c}
                     x \\
                     y \\
                   \end{array}
                 \right)
\right\|+\left\|\left(
                        \begin{array}{cc}
                          A & 0 \\
                          0 & D \\
                        \end{array}
                      \right)\left(
                               \begin{array}{c}
                                 x \\
                                 y \\
                               \end{array}
                             \right)
\right\|.$$
Consequently, the assertion follows from the W\"{u}st theorem (see \cite[Theorem 4]{W1972}).
\end{proof}

\begin{theorem}\label{thA3.1}
The following statements hold.
\begin{enumerate}
\item[(a)] If $D=D^*, \mathcal D(D)\subset\mathcal D(B)$, then $\mathcal A$ is self-adjoint if and only if $$(A-B(D-\lambda)^{-1}C)^*=A-B(D-\overline{\lambda})^{-1}C$$
for some (and hence for all) $\lambda\in\rho(D)$.
\item[(b)] If $A=A^*, \mathcal D(A)\subset\mathcal D(C)$, then $\mathcal A$ is self-adjoint if and only if $$(D-C(A-\lambda)^{-1}B)^*=D-C(A-\overline{\lambda})^{-1}B$$
for some (and hence for all) $\lambda\in\rho(A)$.
\end{enumerate}
\end{theorem}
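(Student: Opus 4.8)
The plan is to prove part (a); part (b) follows by the symmetric argument (interchanging the roles of $A$ and $D$, using the second Frobenius--Schur factorization from Corollary~\ref{corA2.1}(b)). Fix $\lambda\in\rho(D)$. Since $D=D^*$ is in particular closed with nonempty resolvent set and $\mathcal D(D)\subset\mathcal D(B)$, Corollary~\ref{corA2.1}(a) applies and gives the factorization
$$\mathcal A-\lambda=\mathcal R(\lambda)\,\mathcal S(\lambda)\,\mathcal T(\lambda),$$
where $\mathcal R(\lambda)=\left(\begin{smallmatrix} I & B(D-\lambda)^{-1}\\ 0 & I\end{smallmatrix}\right)$, $\mathcal T(\lambda)=\left(\begin{smallmatrix} I & 0\\ (D-\lambda)^{-1}C & I\end{smallmatrix}\right)$ are bounded and boundedly invertible on $X=H_1\times H_2$, and $\mathcal S(\lambda)=\mathrm{diag}(S_1(\lambda),\,D-\lambda)$ with $S_1(\lambda)=A-\lambda-B(D-\lambda)^{-1}C$. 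The key structural point is that $\mathcal R(\lambda)$ and $\mathcal T(\lambda)$ are \emph{bounded} operators that are adjoints of each other's inverses in a controlled way: a direct computation gives $\mathcal R(\lambda)^*=\left(\begin{smallmatrix} I & 0\\ ((D-\lambda)^{-1})^*B^* & I\end{smallmatrix}\right)$, and since $D=D^*$ we have $((D-\lambda)^{-1})^*=(D-\overline\lambda)^{-1}$, while $B^*\supset C$ by the basic assumption $B\subset C^*$ (so $C^*\supset B$, hence $B^*\subset C^{**}=\overline C$). I would need to be slightly careful here about whether $\mathcal R(\lambda)^*=\mathcal T(\overline\lambda)$ exactly; this holds when $C$ is such that $B(D-\lambda)^{-1}$ is bounded with adjoint $(D-\overline\lambda)^{-1}C$ on the nose, which is part of what the hypothesis $\mathcal D(D)\subset\mathcal D(B)$ together with closedness buys (the operator $B(D-\lambda)^{-1}$ is everywhere defined and closed, hence bounded).

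The main computation is then the following. Using the standard rule $(\mathcal R\mathcal S\mathcal T)^*=\mathcal T^*\mathcal S^*\mathcal R^*$, valid here because $\mathcal R,\mathcal T$ are bounded and boundedly invertible (so no domain pathologies arise when taking adjoints of the product — this is where Theorem~\ref{thA2.2}/Theorem~\ref{thA2.1} and the lemmas on adjoints of products with bounded invertible factors, referenced as Lemma~\ref{lemA1} and Lemma~\ref{lemA2} in the earlier examples, do the work), we obtain
$$(\mathcal A-\lambda)^*=\mathcal T(\lambda)^*\,\mathcal S(\lambda)^*\,\mathcal R(\lambda)^*=\mathcal R(\overline\lambda)\,\mathrm{diag}\big(S_1(\lambda)^*,\,D-\overline\lambda\big)\,\mathcal T(\overline\lambda).$$
On the other hand, $(\mathcal A-\lambda)^*=\mathcal A^*-\overline\lambda$, and applying Corollary~\ref{corA2.1}(a) at the point $\overline\lambda\in\rho(D)$ (note $\rho(D)=\rho(D)^*$ since $D$ is self-adjoint, and $\rho(D)\subset\mathbb C\setminus\mathbb R$ up to real points where $(D-\lambda)$ is not invertible) to the (formal) block matrix with the same off-diagonal structure, $\mathcal A-\overline\lambda=\mathcal R(\overline\lambda)\,\mathrm{diag}(S_1(\overline\lambda),\,D-\overline\lambda)\,\mathcal T(\overline\lambda)$. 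Since $\mathcal R(\overline\lambda)$ and $\mathcal T(\overline\lambda)$ are bounded and boundedly invertible, we may cancel them from both sides: $\mathcal A^*=\mathcal A$ if and only if $\mathrm{diag}(S_1(\lambda)^*,\,D-\overline\lambda)=\mathrm{diag}(S_1(\overline\lambda),\,D-\overline\lambda)$, i.e. if and only if $S_1(\lambda)^*=S_1(\overline\lambda)$, which unwinds to $(A-B(D-\lambda)^{-1}C)^*=A-B(D-\overline\lambda)^{-1}C$ (the $\lambda$ and $-\lambda$, $-\overline\lambda$ shifts match up). The ``for some $\Leftrightarrow$ for all'' clause follows because the factorization holds simultaneously for every $\lambda\in\rho(D)$ and the equivalence above is $\lambda$-by-$\lambda$; alternatively one invokes the standard fact that $S_1(\lambda)^*=S_1(\overline\lambda)$ for one $\lambda$ forces it for all, via the resolvent identity relating the Schur complements at different points.

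I expect the main obstacle to be the careful bookkeeping around adjoints of the \emph{unbounded} middle factor inside a triple product — specifically, justifying $(\mathcal R\mathcal S\mathcal T)^* = \mathcal T^*\mathcal S^*\mathcal R^*$ with $\mathcal R,\mathcal T$ bounded invertible but $\mathcal S$ unbounded. This is exactly the kind of statement that the paper's Lemma~\ref{lemA1} and Lemma~\ref{lemA2} (used without restatement in Examples~\ref{exA2.1} and the one following, and presumably proved via Theorem~\ref{thA2.2}) are designed to supply: when one factor is bounded with bounded inverse, the adjoint of the product distributes cleanly, and the result again has a matrix representation. A secondary point requiring care is confirming that $B(D-\lambda)^{-1}$ and $(D-\overline\lambda)^{-1}C$ are genuinely the adjoints of one another — this uses $D=D^*$ to get $((D-\lambda)^{-1})^* = (D-\overline\lambda)^{-1}$ and the basic assumption $B\subset C^*$ to relate $B^*$ and $\overline{C}$, together with $\mathcal D(D)\subset\mathcal D(B)$ and closedness to ensure $B(D-\lambda)^{-1}\in\mathcal B(X)$. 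Once these two technical points are in hand, the proof is just the cancellation argument above.
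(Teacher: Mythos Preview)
Your overall strategy is exactly the paper's: factor $\mathcal A-\lambda$ via Corollary~\ref{corA2.1}(a), take adjoints using Lemmas~\ref{lemA1} and~\ref{lemA2}, compare with the same factorization at $\overline\lambda$, and cancel the outer factors. However, there is a genuine gap at precisely the point you flag as ``slightly careful'' and then set aside.

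The problem is your claim that $\mathcal T(\lambda)=\left(\begin{smallmatrix} I & 0\\ (D-\lambda)^{-1}C & I\end{smallmatrix}\right)$ is bounded on $X$. It is not: $(D-\lambda)^{-1}C$ has domain $\mathcal D(C)$, which in general is a proper dense subspace of $H_1$, so $\mathcal T(\lambda)$ is only densely defined. Consequently your identity $\mathcal R(\lambda)^*=\mathcal T(\overline\lambda)$ fails; the left-hand side is everywhere defined and bounded (being the adjoint of a bounded operator), while the right-hand side has domain $\mathcal D(C)\times H_2$. The hypotheses $\mathcal D(D)\subset\mathcal D(B)$ and $B\subset C^*$ do \emph{not} give $(B(D-\lambda)^{-1})^*=(D-\overline\lambda)^{-1}C$ ``on the nose''; they give $(B(D-\lambda)^{-1})^*=\overline{(D-\overline\lambda)^{-1}C}$, a proper extension. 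For the same reason Lemmas~\ref{lemA1} and~\ref{lemA2} do not apply directly to the triple product with your $\mathcal T(\lambda)$.

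The paper's fix is the step you are missing. One first observes (from $\mathcal D(D)\subset\mathcal D(B)\subset\mathcal D(C^*)$ and \cite[Proposition 3.1]{ALMS}) that $(D-\lambda)^{-1}C$ is bounded on its domain $\mathcal D(C)$. Since the domain of the middle factor $\mathcal S(\lambda)$ is $(\mathcal D(A)\cap\mathcal D(C))\times\mathcal D(D)$, one may replace $(D-\lambda)^{-1}C$ in $\mathcal T(\lambda)$ by its closure $\overline{(D-\lambda)^{-1}C}=(B(D-\overline\lambda)^{-1})^*$ without changing the product $\mathcal S(\lambda)\mathcal T(\lambda)$. After this replacement the new third factor \emph{is} bounded and boundedly invertible, and moreover it is exactly $\mathcal R(\overline\lambda)^*$. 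Only then do Lemmas~\ref{lemA1} and~\ref{lemA2} legitimately yield $(\mathcal A-\lambda)^*=\mathcal R(\overline\lambda)\,\mathrm{diag}(S_1(\lambda)^*,D-\overline\lambda)\,\mathcal R(\lambda)^*$, and the comparison with the factorization of $\mathcal A-\overline\lambda$ (where the same replacement is made) gives the cancellation you want.
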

\begin{proof}
Proof of (a). Let $\lambda\in\rho(D)$.
By applying Corollary \ref{corA2.1} to $(\mathcal A-\lambda)$ we have
\begin{align}\label{eqA3.2}
\mathcal A-\lambda=&\left(
            \begin{array}{cc}
              I & B(D-\lambda)^{-1} \\
              0 & I \\
            \end{array}
          \right)\left(
                   \begin{array}{cc}
                     S_1(\lambda) & 0 \\
                     0 & D-\lambda \\
                   \end{array}
                 \right)\\
\nonumber                &\left(
                          \begin{array}{cc}
                            I & 0 \\
                            (D-\lambda)^{-1}C & I \\
                          \end{array}
                        \right),
\end{align}
where $S_1(\lambda):=A-\lambda-B(D-\lambda)^{-1}C$.
We see from $\mathcal D(D)\subset\mathcal D(B), B\subset C^*$ that $\mathcal D(D)\subset\mathcal D(C^*)$,
so that $(D-\lambda)^{-1}C$ is bounded on its domain $\mathcal D(C)$ (see \cite[Proposition 3.1]{ALMS}).
Moreover, in \eqref{eqA3.2}, the domain of the middle factor is equal to $(\mathcal D(A)\cap\mathcal D(C))\times\mathcal D(D)$,
and so we can replace
$(D-\lambda)^{-1}C=(\overline{(D-\lambda)^{-1}C})|_{\mathcal D(C)}$ by
\begin{align*}
\overline{(D-\lambda)^{-1}C}&=((D-\lambda)^{-1}C)^{**}\\
                              &=(C^*(D-\overline{\lambda})^{-1})^*\mbox{\ \ \ \ \ (by Lemma \ref{lemA1})}\\
                              &=(B(D-\overline{\lambda})^{-1})^*.
\end{align*}
It follows that
\begin{align}\label{eqA3.3}
\mathcal A-\lambda=&\left(
            \begin{array}{cc}
              I & B(D-\lambda)^{-1} \\
              0 & I \\
            \end{array}
          \right)\left(
                   \begin{array}{cc}
                     S_1(\lambda) & 0 \\
                     0 & D-\lambda \\
                   \end{array}
                 \right)\\
\nonumber                &\left(
                          \begin{array}{cc}
                            I & 0 \\
                            (B(D-\overline{\lambda})^{-1})^* & I \\
                          \end{array}
                        \right).
\end{align}
In the factorization \eqref{eqA3.3}, the first and last factor are bounded and boundedly invertible,
and therefore by Lemma \ref{lemA1} and Lemma \ref{lemA2},
\begin{align}\label{eqA3.4}
\mathcal A^*-\overline{\lambda}=&\left(
            \begin{array}{cc}
              I & B(D-\overline{\lambda})^{-1} \\
              0 & I \\
            \end{array}
          \right)\left(
                   \begin{array}{cc}
                     S_1(\lambda)^* & 0 \\
                     0 & D-\overline{\lambda} \\
                   \end{array}
                 \right)\\
\nonumber                &\left(
                          \begin{array}{cc}
                            I & 0 \\
                            (B(D-\lambda)^{-1})^* & I \\
                          \end{array}
                        \right).
\end{align}
Furthermore, in \eqref{eqA3.3} we can replace $\lambda$ by $\overline{\lambda}$ and obtain
\begin{align}\label{eqA3.5}
\mathcal A-\overline{\lambda}=&\left(
            \begin{array}{cc}
              I & B(D-\overline{\lambda})^{-1} \\
              0 & I \\
            \end{array}
          \right)\left(
                   \begin{array}{cc}
                     S_1(\overline{\lambda}) & 0 \\
                     0 & D-\overline{\lambda} \\
                   \end{array}
                 \right)\\
\nonumber                &\left(
                          \begin{array}{cc}
                            I & 0 \\
                            (B(D-\lambda)^{-1})^* & I \\
                          \end{array}
                        \right).
\end{align}
We conclude from \eqref{eqA3.4},\eqref{eqA3.5} that $\mathcal A^*=\mathcal A$ if and only if $S_1(\lambda)^*=S_1(\overline{\lambda})$.

Proof of (b). Let $\lambda\in\rho(A)$. Similar to the proof of (a) we have $\mathcal A^*=\mathcal A$ if and only if $S_2(\lambda)^*=S_2(\overline{\lambda})$,
where $S_2(\lambda):=D-\lambda-C(A-\lambda)^{-1}B$.
\end{proof}

\begin{corollary}
Let $A=A^*, D=D^*$.
Then $\mathcal A$ is self-adjoint if one of the following holds:
\begin{enumerate}
\item[(a)] $C$ is $A$-bounded with relative bound $< 1$ and $B$ is $D$-bounded with relative bound $\leq 1$,
\item[(b)] $C$ is $A$-bounded with relative bound $\leq 1$ and $B$ is $D$-bounded with relative bound $< 1$.
\end{enumerate}
\end{corollary}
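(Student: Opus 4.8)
The plan is to apply Theorem~\ref{thA3.1}(a) with spectral parameter $\lambda=it$ for $t>0$ sufficiently large. Its hypotheses are available: $D=D^\ast$ is assumed, $\mathcal D(D)\subset\mathcal D(B)$ because $B$ is $D$-bounded, and $it\in\rho(D)$ since $D$ is self-adjoint. So it suffices to produce one large $t>0$ with $S_1(it)^\ast=S_1(-it)$, where $S_1(\mu):=A-\mu-B(D-\mu)^{-1}C$ has domain $\mathcal D(A)\cap\mathcal D(C)=\mathcal D(A)$ (the last equality because $C$ is $A$-bounded); equivalently $(A-B(D-it)^{-1}C)^\ast=A-B(D+it)^{-1}C$.

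Write $T_t:=B(D-it)^{-1}C$, and fix inequalities $\|Cx\|\le a^\ast\|x\|+b^\ast\|Ax\|$ on $\mathcal D(A)$ and $\|Bu\|\le\tilde a\|u\|+\tilde b\|Du\|$ on $\mathcal D(D)$. In case (a) I choose $b^\ast<1$ and then, since $B$ has $D$-bound $\le1<1/b^\ast$, choose $\tilde b<1/b^\ast$; in case (b) I choose $\tilde b<1$ and then, since $C$ has $A$-bound $\le1<1/\tilde b$, choose $b^\ast<1/\tilde b$. In either case $\tilde b\,b^\ast<1$. The spectral theorem for $D=D^\ast$ gives $\|(D-it)^{-1}\|\le 1/t$ and $\|D(D-it)^{-1}\|\le 1$, so for $x\in\mathcal D(A)$, applying the $B$-estimate to $u=(D-it)^{-1}Cx\in\mathcal D(D)$,
$$\|T_tx\|\le\Big(\tfrac{\tilde a}{t}+\tilde b\Big)\|Cx\|\le\Big(\tfrac{\tilde a}{t}+\tilde b\Big)\big(a^\ast\|x\|+b^\ast\|Ax\|\big).$$
Hence $T_t$ is $A$-bounded with relative bound at most $(\tilde a/t+\tilde b)b^\ast\to\tilde b\,b^\ast<1$, so for $t$ large this relative bound is $<1$; the same holds for $T_{-t}=B(D+it)^{-1}C$. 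Consequently $S_1(\pm it)=(A\mp it)-T_{\pm t}$ are closed on $\mathcal D(A)$, and in fact boundedly invertible: $A\mp it$ is boundedly invertible with $\|(A\mp it)^{-1}\|\le1/t$ and $\|A(A\mp it)^{-1}\|\le1$ (again $A=A^\ast$), so $\|T_{\pm t}(A\mp it)^{-1}\|\le(\tilde a/t+\tilde b)(a^\ast/t+b^\ast)<1$ for $t$ large, and $S_1(\pm it)=(I-T_{\pm t}(A\mp it)^{-1})(A\mp it)$ is a product of two boundedly invertible operators.

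Finally I would check that $S_1(it)$ and $S_1(-it)$ are formal adjoints on $\mathcal D(A)$. For $x,y\in\mathcal D(A)\subset\mathcal D(C)$ one has $(D-it)^{-1}Cx\in\mathcal D(D)\subset\mathcal D(B)\subset\mathcal D(C^\ast)$, so $B(D-it)^{-1}Cx=C^\ast(D-it)^{-1}Cx$, and using the defining property of $C^\ast$ twice, $B\subset C^\ast$, and $((D-it)^{-1})^\ast=(D+it)^{-1}$,
$$(B(D-it)^{-1}Cx,\,y)=((D-it)^{-1}Cx,\,Cy)=(Cx,\,(D+it)^{-1}Cy)=(x,\,B(D+it)^{-1}Cy).$$
Combined with $A=A^\ast$ and the scalar shift $(A-it)^\ast=A+it$, this yields $(S_1(it)x,y)=(x,S_1(-it)y)$ for all $x,y\in\mathcal D(A)$, i.e. $S_1(-it)\subset S_1(it)^\ast$. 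Since $S_1(it)$ is boundedly invertible, so is $S_1(it)^\ast$, hence it is injective; and $S_1(-it)$ is surjective. An operator that is surjective and contained in an injective operator coincides with it, so $S_1(-it)=S_1(it)^\ast$, and Theorem~\ref{thA3.1}(a) gives that $\mathcal A$ is self-adjoint. I expect the one genuinely substantive point to be the estimate in the second paragraph: the $A$-bound of $B(D-it)^{-1}C$ is, up to an $O(1/t)$ error, the \emph{product} of the $D$-bound of $B$ and the $A$-bound of $C$, which is exactly why one strict inequality together with one non-strict inequality suffices; the resolvent bounds from the spectral theorem and the passage from the operator inclusion to an equality are then routine.
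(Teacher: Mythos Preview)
Your proof is correct. Both you and the paper reduce to Theorem~\ref{thA3.1}(a) and establish the same key estimate---that the $A$-relative bound of $B(D-it)^{-1}C$ is, up to $O(1/t)$, the \emph{product} of the $D$-bound of $B$ and the $A$-bound of $C$, so one strict and one non-strict inequality suffice---but the concluding step is genuinely different. The paper invokes the Hess--Kato perturbation lemma (Lemma~\ref{lemA3}): it shows separately that $T_t:=B(D-it)^{-1}C$ and its adjoint $T_t^\ast$ (identified on $\mathcal D(A)$ with $B(D+it)^{-1}C$ via $(B(D-it)^{-1}C)^\ast=C^\ast\overline{(D+it)^{-1}B^\ast}$) are both $A$-bounded with relative bound $<1$, and then reads off $(A-T_t)^\ast=A-T_{-t}$. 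You instead bypass Lemma~\ref{lemA3} entirely: from the same relative-bound estimate you deduce that $S_1(\pm it)$ are boundedly invertible, verify the symmetric pairing $(S_1(it)x,y)=(x,S_1(-it)y)$ on $\mathcal D(A)$ by a direct sesquilinear computation, and close the inclusion $S_1(-it)\subset S_1(it)^\ast$ to an equality via the ``surjective into injective'' trick. Your route is more self-contained and handles cases (a) and (b) in one stroke through the product condition $\tilde b\,b^\ast<1$; the paper's route is more structural in that it identifies the adjoint of the perturbation explicitly, which is the viewpoint used throughout the section.
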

\begin{proof}
We prove the claim in case (a); the proof in case (b) is analogous.
It is enough to prove
\begin{equation}\label{eqA3.6}
(A-B(D-i\lambda)^{-1}C)^*=A-B(D+i\lambda)^{-1}C \mbox{~for some~} \lambda>0.
\end{equation}

\noindent \emph{Step 1}. We start from
the claim that for $\lambda>0$ large enough, $B(D-i\lambda)^{-1}C$ is $A$-bounded with relative bound $<1$.
Since $C$ is $A$-bounded with relative bound $< 1$, it is enough to prove for each $\varepsilon>0$ there exists $\lambda>0$ such that
\begin{equation*}
\|B(D-i\lambda)^{-1}\|<1+\varepsilon.
\end{equation*}
We observe that for $x\in\mathcal D(D)$ and $\lambda>0$,
\begin{equation}\label{eqA3.7}
\|(D-i\lambda)x\|^2=\|Dx\|^2+\lambda^2\|x\|^2
\end{equation}
since $D$ is self-adjoint.
By the assumption that $B$ is $D$-bounded with relative bound $\leq 1$,
there exists $a(\varepsilon)\geq 0$ such that
\begin{equation}\label{eqA3.8}
\|Bx\|\leq (1+\frac{\varepsilon}{2})\|Dx\|+a(\varepsilon)\|x\|, x\in\mathcal D(D),
\end{equation}
so that for $x\in\mathcal D(D)$, we have, using \eqref{eqA3.7} twice and then \eqref{eqA3.8},
\begin{equation*}
\|Bx\|\leq (1+\frac{\varepsilon}{2}+\frac{a(\varepsilon)}{\lambda})\|(D-i\lambda)x\|.
\end{equation*}
It is enough to choose $\lambda>0$ large enough such that $\frac{a(\varepsilon)}{\lambda}<\frac{\varepsilon}{2}$.

\noindent \emph{Step 2}. In this step, we show that for $\lambda>0$ large enough, $(B(D-i\lambda)^{-1}C)^*$ is $A$-bounded with relative bound $<1$.
Since $\mathcal D(D)\subset\mathcal D(B)$ and $B$ is closable, $B(D-i\lambda)^{-1}$ is everywhere defined and closed,
so that it is bounded by the closed graph theorem.
Thus, by Lemma \ref{lemA1},
\begin{equation*}
(B(D-i\lambda)^{-1}C)^*=C^*(B(D-i\lambda)^{-1})^*=C^*\overline{(D+i\lambda)^{-1}B^*}.
\end{equation*}
Now $(D+i\lambda)^{-1}B^*$ is bounded on $\mathcal D(B^*)$ since $\mathcal D(B)\supset\mathcal D(D)$ (see \cite[Proposition 3.1]{ALMS}),
so that $(C^*\overline{(D+i\lambda)^{-1}B^*})|_{\mathcal D(B^*)}=C^*(D+i\lambda)^{-1}B^*=B(D+i\lambda)^{-1}B^*$.
Then it follows from $\mathcal D(A)\subset\mathcal D(C), C\subset B^*$ that
\begin{equation}\label{eqA3.9}
(B(D-i\lambda)^{-1}C)^*|_{\mathcal D(A)}=(B(D+i\lambda)^{-1}C)|_{\mathcal D(A)}.
\end{equation}
Thus, by \emph{Step 1}, $(B(D-i\lambda)^{-1}C)^*$ is $A$-bounded with relative bound $<1$.

\noindent \emph{Step 3}. Now \eqref{eqA3.6} follows from \emph{Step 1} and \emph{Step 2}
by applying Lemma \ref{lemA3} and \eqref{eqA3.9}.
\end{proof}

\begin{corollary}
Let $A=A^*, D=D^*$.
Then $\mathcal A$ is self-adjoint if one of the following holds:
\begin{enumerate}
\item[(a)] $C$ is $A$-bounded with relative bound $0$, and $\mathcal D(D)\subset\mathcal D(B)$.
\item[(b)] $\mathcal D(A)\subset\mathcal D(C)$, and $B$ is $D$-bounded with relative bound $0$.
\end{enumerate}
\end{corollary}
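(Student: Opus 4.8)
The plan is to deduce both cases from Theorem~\ref{thA3.1}, following the scheme of the preceding corollary; the point is that a relative bound equal to $0$ removes the need to take the spectral parameter large, so the argument actually becomes shorter than in that corollary.

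For case~(a): since $A=A^*$, $D=D^*$, $\mathcal D(D)\subset\mathcal D(B)$, and since $C$ being $A$-bounded forces $\mathcal D(A)\subset\mathcal D(C)$ (so that $\mathcal D(\mathcal A)=\mathcal D(A)\times\mathcal D(D)$ is indeed dense), Theorem~\ref{thA3.1}(a) applies and it suffices to prove
$$(A-B(D-i\mu)^{-1}C)^*=A-B(D+i\mu)^{-1}C$$
for a single $\mu>0$, where $i\mu\in\rho(D)$ because $D$ is self-adjoint. First I would observe that $B(D-i\mu)^{-1}$ is bounded: it is everywhere defined because $(D-i\mu)^{-1}$ has range $\mathcal D(D)\subset\mathcal D(B)$, and it is closed because $B$ is closable and $(D-i\mu)^{-1}$ is bounded, so the closed graph theorem applies. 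Combined with the hypothesis that $C$ is $A$-bounded with relative bound $0$, this gives at once that $B(D-i\mu)^{-1}C$ is $A$-bounded with relative bound $0$ for \emph{every} $\mu>0$ --- this is precisely where case~(a) is shorter than Step~1 of the previous proof, which had to choose $\mu$ large to keep $\|B(D-i\mu)^{-1}\|$ near $1$. From here I would transcribe Step~2 of the previous corollary: by Lemma~\ref{lemA1}, $(B(D-i\mu)^{-1}C)^*=C^*(B(D-i\mu)^{-1})^*=C^*\overline{(D+i\mu)^{-1}B^*}$; then $(D+i\mu)^{-1}B^*$ is bounded on $\mathcal D(B^*)$ by \cite[Proposition~3.1]{ALMS} (again using $\mathcal D(D)\subset\mathcal D(B)$); and from $C\subset B^*$ together with $\mathcal D(A)\subset\mathcal D(C)$ one gets $(B(D-i\mu)^{-1}C)^*|_{\mathcal D(A)}=(B(D+i\mu)^{-1}C)|_{\mathcal D(A)}$, which is likewise $A$-bounded with relative bound $0$. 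With $A=A^*$ and both $B(D-i\mu)^{-1}C$ and its adjoint relatively $A$-bounded with bound $<1$, Lemma~\ref{lemA3} yields the displayed identity, hence the self-adjointness of $\mathcal A$.

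Case~(b) is the mirror image, via Theorem~\ref{thA3.1}(b): now $\mathcal D(A)\subset\mathcal D(C)$ makes $C(A-i\mu)^{-1}$ everywhere defined and closed, hence bounded, and since $B$ is $D$-bounded with relative bound $0$ the operator $C(A-i\mu)^{-1}B$ is $D$-bounded with relative bound $0$; the adjoint identification and the appeal to Lemma~\ref{lemA3} then go through with the roles of the two diagonal entries interchanged, giving $(D-C(A-i\mu)^{-1}B)^*=D-C(A+i\mu)^{-1}B$. The only genuinely new ingredient beyond the previous corollary is the elementary closed graph argument producing boundedness of $B(D-i\mu)^{-1}$ (resp.\ $C(A-i\mu)^{-1}$) for all $\mu>0$; the point that still needs care is the identification of $(B(D-i\mu)^{-1}C)^*$ on $\mathcal D(A)$ --- one must keep track that $\mathcal D(C)$ may be strictly larger than $\mathcal D(A)$, and that the basic assumption $B\subset C^*$ is what permits replacing $C^*$ by $B$ on the range of $(D+i\mu)^{-1}$ --- but this is exactly the computation already performed in the previous corollary, so I expect no essential difficulty there.
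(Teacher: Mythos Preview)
Your argument is correct, but it is not the route the paper takes. You prove case~(a) via Theorem~\ref{thA3.1}(a), i.e.\ through the \emph{first} Schur complement $S_1(\lambda)=A-B(D-\lambda)^{-1}C$, by observing that $B(D-i\mu)^{-1}$ is bounded and hence $B(D-i\mu)^{-1}C$ inherits the $A$-relative bound $0$ from $C$; this is precisely the proof of the preceding corollary with Step~1 collapsed to a triviality, and the adjoint identification you copy from Step~2 there goes through unchanged. The paper instead proves case~(a) via Theorem~\ref{thA3.1}(b), working with the \emph{second} Schur complement $S_2(\lambda)=D-C(A-\lambda)^{-1}B$: it uses the relative bound $0$ of $C$ to make $C(A-\lambda)^{-1}$ arbitrarily small in the sense $\|C(A-\lambda)^{-1}x\|\le\varepsilon\|x\|+b_1\|(A-\lambda)^{-1}x\|$, and then composes with $B$, which is merely $D$-bounded by the closed graph theorem, to conclude that $C(A-\lambda)^{-1}B$ is $D$-bounded with relative bound $0$. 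Your approach is shorter and stays closer to the previous corollary; the paper's approach has the minor conceptual advantage that the ``relative bound $0$'' hypothesis is spent on the outer factor $C(A-\lambda)^{-1}$ rather than on $C$ sitting on the right, which makes the mechanism by which the bound becomes $0$ perhaps more transparent. Either way Lemma~\ref{lemA3} closes the argument, and your treatment of case~(b) by symmetry is likewise correct.
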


\begin{proof}
We prove the claim in case (a); the proof in case (b) is analogous.
Let $\lambda\in\rho(A)$. We need to prove
\begin{equation}\label{eqA3.10}
(D-C(A-\lambda)^{-1}B)^*=D-C(A-\overline{\lambda})^{-1}B.
\end{equation}
\noindent \emph{Step 1}. First we claim that $C(A-\lambda)^{-1}B$ is $D$-bounded with relative bound $0$.
Since $C$ is $A$-bounded with relative bound $0$,
for each $\varepsilon>0$, there exists $b_1(\varepsilon,\lambda)\geq 0$, such that
$$\|C(A-\lambda)^{-1}x\|\leq\varepsilon\|x\|+b_1(\varepsilon,\lambda)\|(A-\lambda)^{-1}x\|, x\in H_1,$$
so that for $x\in\mathcal D(B)$,
\begin{align*}
\|C(A-\lambda)^{-1}Bx\|&\leq\varepsilon\|Bx\|+b_1(\varepsilon,\lambda)\|(A-\lambda)^{-1}Bx\| \\
                       &\leq\varepsilon\|Bx\|+b_2(\varepsilon,\lambda)\|x\|,
\end{align*}
where the last inequality follows from the fact that $(A-\lambda)^{-1}B$ is bounded on $\mathcal D(B)$
(since $\mathcal D(B^*)\supset\mathcal D(C)\supset\mathcal D(A)$).
Furthermore, since $D$ is closed and $\mathcal D(D)\subset\mathcal D(B)$, there are $a, b\geq 0$ such that
$$\|Bx\|\leq a\|Dx\|+b\|x\|, x\in\mathcal D(D),$$
so that
$$\|C(A-\lambda)^{-1}Bx\|\leq\varepsilon a\|Dx\|+b_3(\varepsilon,\lambda)\|x\|, x\in\mathcal D(D).$$

\noindent \emph{Step 2}. We have, with arguments similar to the ones used in the proof of \eqref{eqA3.9},
$$(C(A-\lambda)^{-1}B)^*|_{\mathcal D(D)}=C(A-\overline{\lambda})^{-1}B|_{\mathcal D(D)},$$
so that by \emph{Step 1}, $(C(A-\lambda)^{-1}B)^*$ is $D$-bounded with relative bound $0$.

\noindent \emph{Step 3}. Finally, \eqref{eqA3.10} follows from \emph{Step 1} and \emph{Step 2}
by applying Lemma \ref{lemA3}.
\end{proof}

The following analogue of Theorem \ref{thA3.1} can be proved in the same way.
\begin{theorem}
The following statements hold.
\begin{enumerate}
\item[(a)] If $D=D^*$ with $\mathcal D(D)\subset\mathcal D(B)$, then $\mathcal A$ is essentially self-adjoint if and only if $$(A-B(D-\lambda)^{-1}C)^*=\overline{A-B(D-\overline{\lambda})^{-1}C}$$
for some (and hence for all) $\lambda\in\rho(D)$.
\item[(b)] If $A=A^*$ with $\mathcal D(A)\subset\mathcal D(B^*)$, then $\mathcal A$ is essentially self-adjoint if and only if $$(D-C(A-\lambda)^{-1}B)^*=\overline{D-C(A-\overline{\lambda})^{-1}B}$$
for some (and hence for all) $\lambda\in\rho(A)$.
\end{enumerate}
\end{theorem}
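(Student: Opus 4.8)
The plan is to run the factorization argument of Theorem~\ref{thA3.1} one level higher in the adjoint hierarchy. The starting observation is that a densely defined \emph{symmetric} operator is essentially self-adjoint precisely when its adjoint equals its closure; since basic assumptions (i)--(iii) make $\mathcal A$ symmetric, $\mathcal A$ is essentially self-adjoint if and only if $\mathcal A^{*}-\overline{\lambda}=\overline{\mathcal A}-\overline{\lambda}=\overline{\mathcal A-\overline{\lambda}}$ for one, equivalently every, $\lambda\in\rho(D)$ in case (a), respectively $\lambda\in\rho(A)$ in case (b). So I would first fix such a $\lambda$.

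For (a), I would reproduce verbatim the first half of the proof of Theorem~\ref{thA3.1}(a): using Corollary~\ref{corA2.1}, the implication $\mathcal D(D)\subset\mathcal D(B)$, $B\subset C^{*}\Rightarrow\mathcal D(D)\subset\mathcal D(C^{*})$, \cite[Proposition~3.1]{ALMS}, Theorem~\ref{thA2.1} and Lemmas~\ref{lemA1}, \ref{lemA2}, one arrives at \eqref{eqA3.3}, \eqref{eqA3.4}, \eqref{eqA3.5}. Writing $S_{1}(\mu):=A-\mu-B(D-\mu)^{-1}C$, these identities say that $\mathcal A^{*}-\overline{\lambda}$ and $\mathcal A-\overline{\lambda}$ are obtained from $\operatorname{diag}(S_{1}(\lambda)^{*},D-\overline{\lambda})$ and $\operatorname{diag}(S_{1}(\overline{\lambda}),D-\overline{\lambda})$, respectively, by pre- and post-multiplication with the \emph{same} pair of everywhere-defined, bounded, boundedly invertible operators.

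The one genuinely new ingredient is to close the factorization \eqref{eqA3.5}. Since pre- and post-composition with bounded boundedly invertible operators commutes with the operator closure, $\overline{\mathcal A-\overline{\lambda}}$ equals the same two-sided product with $\overline{\operatorname{diag}(S_{1}(\overline{\lambda}),D-\overline{\lambda})}$ in the middle; and because $D-\overline{\lambda}$ is already closed ($D=D^{*}$), the closure of this block-diagonal operator is $\operatorname{diag}(\overline{S_{1}(\overline{\lambda})},D-\overline{\lambda})$. Comparing with \eqref{eqA3.4} and cancelling the invertible outer factors, $\mathcal A^{*}=\overline{\mathcal A}$ holds if and only if $S_{1}(\lambda)^{*}=\overline{S_{1}(\overline{\lambda})}$, i.e., adding back $\overline{\lambda}$, if and only if $(A-B(D-\lambda)^{-1}C)^{*}=\overline{A-B(D-\overline{\lambda})^{-1}C}$. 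As this identity is, for each admissible $\lambda$, equivalent to the single $\lambda$-independent property that $\mathcal A$ be essentially self-adjoint, ``for some $\lambda$'' and ``for all $\lambda$'' are equivalent, which proves (a).

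Case (b) is parallel: fix $\lambda\in\rho(A)$ and start from the second Frobenius--Schur factorization in Corollary~\ref{corA2.1}(b) and the second Schur complement $S_{2}(\mu):=D-\mu-C(A-\mu)^{-1}B$; now $A-\lambda$ (closed, since $A=A^{*}$) plays the role that $D-\overline{\lambda}$ played above, and the hypothesis $\mathcal D(A)\subset\mathcal D(B^{*})$ is what is used, together with $C\subset B^{*}$ and \cite[Proposition~3.1]{ALMS}, to replace the off-diagonal entries of the outer factors by bounded everywhere-defined operators (e.g.\ $(A-\lambda)^{-1}B$ by $(B^{*}(A-\overline{\lambda})^{-1})^{*}$) so that those factors become bounded and boundedly invertible. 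I expect the main obstacle to lie exactly in this bookkeeping, most delicately in case (b): one must check carefully that enlarging the off-diagonal entries to their bounded extensions does not change the relevant two-sided product, that the resulting outer factors are genuinely boundedly invertible, and that passing to the closure of $\mathcal A-\overline{\lambda}$ affects only the Schur-complement block of the middle factor. Once this is in place, the comparison step goes through exactly as in (a), yielding the stated criterion; the remainder is a transcription of the proof of Theorem~\ref{thA3.1}.
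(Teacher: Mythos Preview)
Your proposal is correct and is precisely the approach the paper has in mind: the paper provides no separate proof for this theorem, stating only that it ``can be proved in the same way'' as Theorem~\ref{thA3.1}. You have correctly identified the one additional step needed---namely that closure commutes with left and right multiplication by bounded, boundedly invertible operators, so that closing \eqref{eqA3.5} only closes the Schur-complement block of the middle factor---which converts the self-adjointness criterion $S_1(\lambda)^{*}=S_1(\overline{\lambda})$ into the essential self-adjointness criterion $S_1(\lambda)^{*}=\overline{S_1(\overline{\lambda})}$.
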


\begin{corollary}\label{corA3.3}
The following statements hold.
\begin{enumerate}
\item[(a)] Let $A$ be self-adjoint with $\mathcal D(|A|^\frac{1}{2})\subset\mathcal D(C)$
and let $D$ be essentially self-adjoint.
If $\mathcal D(B)\cap\mathcal D(D)$ is a core of $\overline{D}$,
then $\mathcal A$ is essentially self-adjoint.
\item[(b)] Let $A$ be essentially self-adjoint
and let $D$ be self-adjoint with $\mathcal D(|D|^\frac{1}{2})\subset\mathcal D(B)$.
If $\mathcal D(A)\cap\mathcal D(C)$ is a core of $\overline{A}$,
then $\mathcal A$ is essentially self-adjoint.
\end{enumerate}
\end{corollary}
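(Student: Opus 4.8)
The plan is to treat case (a); case (b) follows by the symmetric roles of $A$ and $D$ together with $C \subset B^*$. I would invoke the preceding theorem (the essential-self-adjointness analogue of Theorem \ref{thA3.1}), so that it suffices to fix $\lambda \in \rho(\overline D) = \rho(D^{**})$ with $\lambda = i\mu$, $\mu > 0$ large, and to prove
\begin{equation*}
\bigl(A - B(\overline D - \lambda)^{-1}C\bigr)^* = \overline{A - B(\overline D - \overline\lambda)^{-1}C}.
\end{equation*}
Since $A = A^*$ is already self-adjoint and the Schur-complement perturbation $B(\overline D - \lambda)^{-1}C$ is the object that must be controlled, the whole problem reduces to showing that this perturbation is $A$-bounded with relative bound $0$ — or rather relative bound $< 1$ together with an adjoint-symmetry identity on $\mathcal D(A)$ of the type \eqref{eqA3.9} — after which Lemma \ref{lemA3} (the Kato–Rellich-type stability lemma) closes the argument exactly as in the proofs of the two preceding corollaries.

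The key steps, in order, are: (1) Reduce via the essential-self-adjointness theorem to the Schur-complement identity above, working with $\overline D$ in place of $D$ since $D$ is only essentially self-adjoint. (2) Use the hypothesis $\mathcal D(|A|^{1/2}) \subset \mathcal D(C)$: by the spectral theorem for the self-adjoint operator $A$, the operator $C(|A|+1)^{-1/2}$ is closed and everywhere defined, hence bounded, so $C$ is form-bounded with respect to $A$; this yields, for any $\varepsilon>0$, an estimate $\|Cx\| \le \varepsilon\|Ax\| + b(\varepsilon)\|x\|$ on $\mathcal D(A)$, i.e. $C$ is $A$-bounded with relative bound $0$. (3) Exploit that $\mathcal D(B)\cap\mathcal D(D)$ is a core of $\overline D$ to identify $(\overline D - \lambda)^{-1}C = \overline{(\overline D-\lambda)^{-1}C}|_{\mathcal D(C)}$ and, via Lemma \ref{lemA1}, to rewrite $\bigl((\overline D - \lambda)^{-1}C\bigr)^{**} = (B(\overline D-\overline\lambda)^{-1})^*$ just as in the proof of Theorem \ref{thA3.1}(a); this is where the core hypothesis is consumed, because it guarantees the middle factor of the Frobenius–Schur factorization has the right domain and that $B(\overline D - \overline\lambda)^{-1}$ (defined on all of $H_2$) is bounded by the closed graph theorem. (4) Combine: the boundedness of $(\overline D - \lambda)^{-1}$ together with the relative-bound-$0$ estimate for $C$ shows $B(\overline D-\lambda)^{-1}C$ has $A$-bound $0$ on $\mathcal D(A)$, and the adjoint of this perturbation, restricted to $\mathcal D(A)$, equals $B(\overline D - \overline\lambda)^{-1}C$ by an argument identical to \emph{Step 2} of the previous corollaries. (5) Apply Lemma \ref{lemA3} to $A$ and the perturbation to conclude the Schur-complement identity, hence essential self-adjointness of $\mathcal A$.

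The main obstacle I anticipate is Step 3 — reconciling the fact that $D$ is only \emph{essentially} self-adjoint with the Frobenius–Schur machinery of Corollary \ref{corA2.1} and Theorem \ref{thA3.1}, which were stated for $D = D^*$. One must be careful that $\rho(D) = \emptyset$ in general while $\rho(\overline D) \neq \emptyset$, that $(\overline D - \lambda)^{-1}$ maps into $\mathcal D(\overline D)$ rather than $\mathcal D(D)$, and that the hypothesis ``$\mathcal D(B)\cap\mathcal D(D)$ is a core of $\overline D$'' is exactly what lets one replace $\mathcal D(D)$-statements by $\mathcal D(\overline D)$-statements without losing the domain bookkeeping; in particular one needs $\mathcal D(|A|^{1/2}) \subset \mathcal D(C)$ to survive the passage to closures on the $C$-side. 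Once those domain identifications are pinned down, the estimates are routine and mirror the two corollaries preceding this statement.
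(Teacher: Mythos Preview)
Your plan has a genuine gap: you have chosen the wrong Schur complement. In Corollary~\ref{corA3.3}(a) it is $A$ that is self-adjoint with $\mathcal D(A)\subset\mathcal D(|A|^{1/2})\subset\mathcal D(C)$, so the applicable half of the essential-self-adjointness theorem is part~(b), which reduces the problem to
\[
(D-C(A-\lambda)^{-1}B)^*=\overline{D-C(A-\overline\lambda)^{-1}B},\qquad \lambda\in\rho(A).
\]
You instead invoke part~(a), which requires $D=D^*$ and $\mathcal D(D)\subset\mathcal D(B)$. Replacing $D$ by $\overline D$ does not rescue this: to even form $B(\overline D-\lambda)^{-1}$ as an everywhere-defined (hence, by the closed graph theorem, bounded) operator you need $\mathcal D(\overline D)\subset\mathcal D(B)$, and the hypothesis ``$\mathcal D(B)\cap\mathcal D(D)$ is a core of $\overline D$'' does \emph{not} give this. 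A core is only graph-norm dense in $\mathcal D(\overline D)$; it says nothing about $\mathcal D(\overline D)$ being contained in $\mathcal D(B)$. So your Step~(3) fails exactly where you flagged the obstacle, and it is not a bookkeeping issue that can be patched.

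The paper's route is different in substance, not just in which diagonal entry is inverted. Working with the second Schur complement, the hypothesis $\mathcal D(|A|^{1/2})\subset\mathcal D(C)$ is used not merely to get $A$-relative bound $0$ for $C$ (your Step~(2)), but to show that $C(A-\lambda)^{-1}B$ is \emph{bounded} on $\mathcal D(B)$, via the factorization
\[
C(A-\lambda)^{-1}B=\bigl(C(|A|+I)^{-1/2}\bigr)\,U(\lambda)\,\bigl((|A|+I)^{-1/2}B\bigr),
\]
where $U(\lambda)=(|A|+I)^{1/2}(A-\lambda)^{-1}(|A|+I)^{1/2}$ and all three factors are bounded (the first by the closed graph theorem, the third as the restriction of an adjoint). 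Once the perturbation is bounded, the core hypothesis on $\mathcal D(B)\cap\mathcal D(D)$ is exactly what Lemma~\ref{lemA3} needs to conclude $(D-C(A-\lambda)^{-1}B)^*=D^*-(C(A-\lambda)^{-1}B)^*$ and hence the desired identity. No inversion of $D$ or $\overline D$ is ever required.
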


\begin{proof}
We prove the claim in case (a); the proof in case (b) is analogous.
Let $\lambda\in\rho(A)$, we start to prove
 $$(D-C(A-\lambda)^{-1}B)^*=\overline{D-C(A-\overline{\lambda})^{-1}B}$$
via the arguments used in \cite[Section 1]{MS}.
Since $C\subset B^*$ and $\mathcal D(|A|^\frac{1}{2})\subset\mathcal D(C)$,
the operator
$$C(|A|+I)^{-\frac{1}{2}}=B^*(|A|+I)^{-\frac{1}{2}}$$
is closed and defined on the whole space.
It follows from the closed graph theorem that $C(|A|+I)^{-\frac{1}{2}}$ is bounded.
Consequently, the operator
$$(|A|+I)^{-\frac{1}{2}}B=(B^*(|A|+I)^{-\frac{1}{2}})^*|_{\mathcal D(B)}$$
is bounded on $\mathcal D(B)$.
Since the operator
$$U(\lambda):=(|A|+I)^\frac{1}{2}(A-\lambda)^{-1}(|A|+I)^\frac{1}{2}$$
is bounded on its domain $\mathcal D(|A|^\frac{1}{2})$,
the operator
$$C(A-\lambda)^{-1}B=C(|A|+I)^{-\frac{1}{2}}U(\lambda)(|A|+I)^{-\frac{1}{2}}B$$
is also bounded on its domain $\mathcal D(B)$.
Hence, if $\mathcal D(B)\cap\mathcal D(D)$ is a core of $\overline{D}$,
then by Lemma \ref{lemA3},
\begin{align*}
(D-C(A-\lambda)^{-1}B)^*&=D^*-(C(A-\lambda)^{-1}B)^*\\
                        &=\overline{D}-\overline{C(A-\overline{\lambda})^{-1}B}\\
                        &=\overline{D-C(A-\overline{\lambda})^{-1}B}.
\end{align*}
\end{proof}

\begin{remark}
It follows from Corollary \ref{corA3.3} and Theorem \ref{thA3.1} that
the linearized Navier-Stokes operator considered in \cite{FFMM}
is essentially self-adjoint and it is not closed.
\end{remark}

By the techniques used in the proofs of \cite[Theorem 3.1]{AJW} and the corresponding corollaries therein,
with some slight modifications,
we can prove the following theorem and related corollaries.

\begin{theorem}
Let $H_1=H_2$ and let $C=B^*$.
If $B$ is closed with $\rho(B)\neq\emptyset$
and $\mathcal D(\mathcal A)=\mathcal D(B^*)\times\mathcal D(B)$,
then the following statements are equivalent:
\begin{enumerate}
\item[(a)] $\mathcal A$ is self-adjoint,
\item[(b)] $(B-A(B^*-\overline{\lambda})^{-1}D)^*=B^*-D(B-\lambda)^{-1}A$ for some (and hence for all) $\lambda\in\rho(B)$,
\item[(c)] $(B^*-D(B-\lambda)^{-1}A)^*=B-A(B^*-\overline{\lambda})^{-1}D$ for some (and hence for all) $\lambda\in\rho(B)$.
\end{enumerate}
\end{theorem}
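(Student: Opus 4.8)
Before seeing the authors' proof: the statement is the ``anti-diagonal'' counterpart of Theorem~\ref{thA3.1}, and my plan is to imitate that proof after two preparatory moves. First, since $\rho(B)\neq\emptyset$ but $B$ itself need not be invertible, I would subtract the bounded \emph{self-adjoint} operator $\mathcal M_\lambda:=\left(\begin{smallmatrix}0&\lambda\\ \bar\lambda&0\end{smallmatrix}\right)$, which replaces the off-diagonal corners by the boundedly invertible $B-\lambda$ and $B^*-\bar\lambda$; second, I would conjugate by the self-adjoint unitary $\mathcal J:=\left(\begin{smallmatrix}0&I\\ I&0\end{smallmatrix}\right)$, which moves the invertible entry onto the diagonal so that Corollary~\ref{corA2.1} applies. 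Fix $\lambda\in\rho(B)$; then $\bar\lambda\in\rho(B^*)$ and $(B^*-\bar\lambda)^{-1}=((B-\lambda)^{-1})^*$. From $\mathcal D(\mathcal A)=(\mathcal D(A)\cap\mathcal D(B^*))\times(\mathcal D(B)\cap\mathcal D(D))=\mathcal D(B^*)\times\mathcal D(B)$ one reads off $\mathcal D(B^*)\subset\mathcal D(A)$ and $\mathcal D(B)\subset\mathcal D(D)$; with the standing assumptions $A\subset A^*$, $D\subset D^*$ this gives the two identities the whole argument rests on,
\[
A(B^*-\bar\lambda)^{-1}=A^*(B^*-\bar\lambda)^{-1},\qquad D(B-\lambda)^{-1}=D^*(B-\lambda)^{-1},
\]
with both sides bounded and everywhere defined (the resolvents map into $\mathcal D(B^*)\subset\mathcal D(A)$, resp.\ $\mathcal D(B)\subset\mathcal D(D)$, where $A=A^*$, resp.\ $D=D^*$; boundedness is the closed graph theorem together with closability of $A,D$, as in the proof of Theorem~\ref{thA3.1}). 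Finally, since $\mathcal M_\lambda$ is bounded and self-adjoint, $\mathcal A$ is self-adjoint iff $\mathcal A-\mathcal M_\lambda$ is; writing $\mathcal A-\mathcal M_\lambda=\mathcal N\mathcal J$ with $\mathcal N:=\left(\begin{smallmatrix}B-\lambda&A\\ D&B^*-\bar\lambda\end{smallmatrix}\right)$ on $\mathcal D(B)\times\mathcal D(B^*)$, a short manipulation of adjoints of products by $\mathcal J$ shows that this in turn is equivalent to $\mathcal N=(\mathcal J\mathcal N\mathcal J)^*$.

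Next I would apply Corollary~\ref{corA2.1}(a) to $\mathcal N$ with diagonal corner $B^*-\bar\lambda$ (legitimate since $\mathcal D(B^*)\subset\mathcal D(A)$) and, separately, to $\mathcal J\mathcal N\mathcal J=\left(\begin{smallmatrix}B^*-\bar\lambda&D\\ A&B-\lambda\end{smallmatrix}\right)$ with diagonal corner $B-\lambda$ (legitimate since $\mathcal D(B)\subset\mathcal D(D)$). Exactly as in the proof of Theorem~\ref{thA3.1}, in each factorization I would replace the ``resolvent times unbounded operator'' entries of the two triangular factors by their closures --- harmless because the diagonal middle factor already has the smaller domain --- and express those closures as adjoints via Lemma~\ref{lemA1}. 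This yields
\[
\mathcal N=\mathcal R\,\operatorname{diag}(T_1(\lambda)-\lambda,\,B^*-\bar\lambda)\,\mathcal T,\qquad
\mathcal J\mathcal N\mathcal J=\mathcal R'\,\operatorname{diag}(U_1(\lambda)-\bar\lambda,\,B-\lambda)\,\mathcal T',
\]
where $T_1(\lambda):=B-A(B^*-\bar\lambda)^{-1}D$ on $\mathcal D(B)$, $U_1(\lambda):=B^*-D(B-\lambda)^{-1}A$ on $\mathcal D(B^*)$, and $\mathcal R,\mathcal T,\mathcal R',\mathcal T'$ are bounded and boundedly invertible. Taking the adjoint of the second factorization (Lemma~\ref{lemA2}) and inserting the two identities above, the outer factors collapse onto precisely $\mathcal R$ and $\mathcal T$, so that $(\mathcal J\mathcal N\mathcal J)^*=\mathcal R\,\operatorname{diag}(U_1(\lambda)^*-\lambda,\,B^*-\bar\lambda)\,\mathcal T$. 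Comparing with the factorization of $\mathcal N$ and cancelling the bounded invertible outer factors, $\mathcal N=(\mathcal J\mathcal N\mathcal J)^*$ holds iff $T_1(\lambda)=U_1(\lambda)^*$, which is statement (c). This establishes (a)$\Leftrightarrow$(c), and since $\lambda\in\rho(B)$ was arbitrary it also gives the ``for some $\Leftrightarrow$ for all'' clause.

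For (b)$\Leftrightarrow$(c) I would use closedness. Because $\mathcal R,\mathcal T,\mathcal R',\mathcal T'$ and $\mathcal J$ are bounded and boundedly invertible, $T_1(\lambda)$ is closed $\iff$ $\mathcal N$ is closed $\iff$ $\mathcal A$ is closed $\iff$ $U_1(\lambda)$ is closed. Hence if (c) holds, then $T_1(\lambda)=U_1(\lambda)^*$ is closed, so $U_1(\lambda)$ is closed and $U_1(\lambda)=U_1(\lambda)^{**}=T_1(\lambda)^*$, i.e.\ (b); the reverse implication is symmetric.

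The Frobenius--Schur factorizations, the $\mathcal M_\lambda/\mathcal J$ reduction and the closedness equivalences are routine. The real obstacle is the domain and adjoint bookkeeping: showing that each of $A(B^*-\bar\lambda)^{-1}$, $(B-\lambda)^{-1}A$, $D(B-\lambda)^{-1}$, $(B^*-\bar\lambda)^{-1}D$ and their closures is bounded and everywhere defined; applying the adjoint-of-a-product rule with the bounded factor on the correct side (the left and right cases genuinely differ for unbounded operators); and, above all, checking that after these manipulations the triangular factors of $\mathcal N$ and of $(\mathcal J\mathcal N\mathcal J)^*$ agree verbatim. That last point is exactly where $A\subset A^*$, $D\subset D^*$ and $\mathcal D(\mathcal A)=\mathcal D(B^*)\times\mathcal D(B)$ enter, and it is the analogue of the step in Theorem~\ref{thA3.1} where \eqref{eqA3.4} and \eqref{eqA3.5} are seen to have the same outer factors.
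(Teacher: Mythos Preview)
Your proposal is correct and follows precisely the strategy the paper alludes to: the authors do not write out a proof but state that it follows ``by the techniques used in the proofs of \cite[Theorem~3.1]{AJW} \ldots\ with some slight modifications,'' and your argument---shift by the bounded self-adjoint $\mathcal M_\lambda$ to make the off-diagonal corners boundedly invertible, conjugate by the self-adjoint unitary $\mathcal J$ to bring them onto the diagonal, then run the Frobenius--Schur factorization and adjoint bookkeeping exactly as in Theorem~\ref{thA3.1}---is the natural implementation of that remark. The reduction $\mathcal A$ self-adjoint $\iff \mathcal N=(\mathcal J\mathcal N\mathcal J)^*$, the identification of the outer triangular factors via the identities $A(B^*-\bar\lambda)^{-1}=A^*(B^*-\bar\lambda)^{-1}$ and $D(B-\lambda)^{-1}=D^*(B-\lambda)^{-1}$, and the closedness chain used for (b)$\Leftrightarrow$(c) are all valid, so nothing further is needed.
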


\begin{corollary}
Let $H_1=H_2$ and let $B=B^*=C$.
Then $\mathcal A$ is self-adjoint if one of the following holds:
\begin{enumerate}
\item[(a)] $A$ is $B$-bounded with relative bound $< 1$ and $D$ is $B$-bounded with relative bound $\leq 1$,
\item[(b)] $A$ is $B$-bounded with relative bound $\leq 1$ and $D$ is $B$-bounded with relative bound $< 1$.
\end{enumerate}
\end{corollary}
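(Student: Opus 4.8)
The plan is to deduce this from the theorem immediately preceding it. First I would check that the theorem applies: since $B=B^*=C$, the operator $B$ is self-adjoint, hence closed, and $\rho(B)\supseteq\mathbb{C}\setminus\mathbb{R}\neq\emptyset$; and since $A$, $D$ are $B$-bounded we have $\mathcal D(B)\subseteq\mathcal D(A)\cap\mathcal D(D)$, so that
$$\mathcal D(\mathcal A)=(\mathcal D(A)\cap\mathcal D(C))\times(\mathcal D(B)\cap\mathcal D(D))=\mathcal D(B)\times\mathcal D(B)=\mathcal D(B^*)\times\mathcal D(B).$$
Thus it suffices to verify condition (b) of the theorem for one convenient $\lambda\in\rho(B)$, say $\lambda=i\mu$ with $\mu>0$ large (so that $\overline\lambda=-i\mu\in\rho(B)$ too); explicitly, in case (a) I would prove
$$(B-A(B+i\mu)^{-1}D)^*=B-D(B-i\mu)^{-1}A$$
for all sufficiently large $\mu$. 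Case (b) then follows from case (a) by conjugating $\mathcal A$ with the self-adjoint unitary on $H_1\times H_2$ that interchanges the two coordinates: this conjugation preserves self-adjointness (Lemma \ref{lemA2}) and carries $\mathcal A$ to the block operator matrix with diagonal entries $D$, $A$ and off-diagonal entries $B$, which is again of the required form but with the relative bounds of $A$ and $D$ interchanged.

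To verify the displayed identity I would follow the three-step scheme used in the proof of the corollary after Theorem \ref{thA3.1}. \emph{Step 1:} show that for $\mu$ large the operator $A(B\mp i\mu)^{-1}D$ (which is defined at least on $\mathcal D(B)$, since $(B\mp i\mu)^{-1}$ maps into $\mathcal D(B)\subseteq\mathcal D(A)$) is $B$-bounded with relative bound $<1$. Because $B$ is self-adjoint one has $\|(B\mp i\mu)^{-1}\|\le\mu^{-1}$ and $\|B(B\mp i\mu)^{-1}\|\le 1$. Writing $\|Ax\|\le(r+\varepsilon)\|Bx\|+c_\varepsilon\|x\|$ for the relative bound $r<1$ of $A$ (case (a)), applying it to $x=(B\mp i\mu)^{-1}Dy$, and then using $\|Dy\|\le(1+\varepsilon)\|By\|+b_\varepsilon\|y\|$ for the relative bound $\le 1$ of $D$, one gets, for $y\in\mathcal D(B)$,
$$\|A(B\mp i\mu)^{-1}Dy\|\le\Big(r+\varepsilon+\tfrac{c_\varepsilon}{\mu}\Big)\big((1+\varepsilon)\|By\|+b_\varepsilon\|y\|\big).$$
Fixing $\varepsilon$ with $(r+\varepsilon)(1+\varepsilon)<1$ (possible since $r<1$) and then $\mu$ large so that $(r+\varepsilon+\tfrac{c_\varepsilon}{\mu})(1+\varepsilon)<1$ makes the coefficient of $\|By\|$ strictly less than $1$.

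\emph{Step 2:} identify the adjoint. By Step 1, $A(B\mp i\mu)^{-1}$ is everywhere defined and bounded; computing the adjoint of the product $A(B\mp i\mu)^{-1}D$ with Lemma \ref{lemA1} (moving the bounded middle factor to the other side) and invoking $A\subseteq A^*$, $D\subseteq D^*$, $B=B^*$ together with the boundedness of $(B\pm i\mu)^{-1}A^*$ on $\mathcal D(A^*)$ (valid since $\mathcal D(B)\subseteq\mathcal D(A)$, cf. \cite[Proposition 3.1]{ALMS}), one finds
$$\big(A(B\mp i\mu)^{-1}D\big)^*\big|_{\mathcal D(B)}=D(B\pm i\mu)^{-1}A\big|_{\mathcal D(B)},$$
and the right-hand side is again $B$-bounded with relative bound $<1$ by the estimate of Step 1. \emph{Step 3:} since $B$ is self-adjoint and the correction term $A(B+i\mu)^{-1}D$ is $B$-bounded with relative bound $<1$, Lemma \ref{lemA3} gives $(B-A(B+i\mu)^{-1}D)^*=B^*-\big(A(B+i\mu)^{-1}D\big)^*=B-D(B-i\mu)^{-1}A$, which is condition (b) of the theorem; hence $\mathcal A$ is self-adjoint.

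The main obstacle is the borderline relative bound $\le 1$ on one of the off-diagonal terms. A direct Kato--Rellich argument on the splitting $\mathcal A=\mathcal S+\mathcal T$, with $\mathcal S$ the off-diagonal matrix built from $B$ and $\mathcal T=\operatorname{diag}(A,D)$, proves self-adjointness only when \emph{both} $A$ and $D$ are $B$-bounded with relative bound $<1$, and W\"ust's theorem only yields essential self-adjointness in the borderline case. Passing through the Schur-complement criterion of the preceding theorem circumvents this: in the product $A(B\mp i\mu)^{-1}D$ the two relative bounds multiply, so $(<1)\cdot(\le 1)<1$, while the lower-order constants acquire the damping factor $\|(B\mp i\mu)^{-1}\|\le\mu^{-1}$ and can be absorbed by taking $\mu$ large, so that the correction term genuinely has relative bound $<1$. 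The only remaining care is the domain bookkeeping in Step 2 when taking adjoints of products of unbounded operators, which is routine once one knows $\mathcal D(B)\subseteq\mathcal D(A)\cap\mathcal D(D)$ and the symmetry relations (ii).
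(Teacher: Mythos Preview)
Your proposal is correct and is precisely the argument the paper has in mind: the paper does not spell out a proof here but says the corollary follows ``by the techniques used in the proofs of \cite[Theorem 3.1]{AJW} and the corresponding corollaries therein, with some slight modifications,'' and your three-step scheme is exactly the analogue of the proof of the corollary following Theorem~\ref{thA3.1}, transplanted from the diagonal Schur complement $S_1(\lambda)$ to the off-diagonal complement $B-A(B^*-\overline\lambda)^{-1}D$ of the preceding theorem. The verification that the theorem applies, the reduction of case~(b) to case~(a) via conjugation by the coordinate swap, the estimate in Step~1 exploiting $\|B(B\mp i\mu)^{-1}\|\le 1$ and $\|(B\mp i\mu)^{-1}\|\le\mu^{-1}$ so that the two relative bounds multiply, the adjoint computation in Step~2 using Lemma~\ref{lemA1} and the inclusions $A\subset A^*$, $D\subset D^*$, and the appeal to Lemma~\ref{lemA3} in Step~3 are all in order.
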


For the definition and properties of a maximal accretive operator in the following corollary, see \cite[Section IV.4]{Nagy}.
\begin{corollary}
Let $H_1=H_2$ and let $C=B^*$.
If $B$ or $-B$ is maximal accretive, then $\mathcal A$ is self-adjoint if one of the following holds:
\begin{enumerate}
\item[(a)] $A$ is $B^*$-bounded with relative bound $< 1$ and $D$ is $B$-bounded with relative bound $\leq 1$,
\item[(b)] $A$ is $B^*$-bounded with relative bound $\leq 1$ and $D$ is $B$-bounded with relative bound $< 1$.
\end{enumerate}
\end{corollary}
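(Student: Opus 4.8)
The plan is to reduce the statement to the preceding theorem, whose part~(b) (equivalently~(c)) turns self-adjointness of $\mathcal{A}$ into a single operator identity; I would then verify that identity for a conveniently chosen $\lambda\in\rho(B)$, treating case~(a) and verifying part~(b) (case~(b) is symmetric, via part~(c) with the roles of $(A,B^{*})$ and $(D,B)$ interchanged). First I would check that the theorem applies. Since $A$ is $B^{*}$-bounded we have $\mathcal{D}(B^{*})\subset\mathcal{D}(A)$, and since $D$ is $B$-bounded we have $\mathcal{D}(B)\subset\mathcal{D}(D)$; together with the basic assumptions this gives $\mathcal{D}(\mathcal{A})=\mathcal{D}(B^{*})\times\mathcal{D}(B)$. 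If $B$ or $-B$ is maximal accretive then $B$ is closed and $\rho(B)\neq\emptyset$; interchanging $B$ with $-B$ if necessary (which replaces $\mathcal{A}$ by the unitarily equivalent operator obtained by conjugating with $\operatorname{diag}(I,-I)$ and leaves all other hypotheses intact), I may assume $B$ is maximal accretive, so that $-t\in\rho(B)$ for every $t>0$ and, by accretivity, $\|(B+t)^{-1}\|\leq 1/t$ and $\|B(B+t)^{-1}\|=\|I-t(B+t)^{-1}\|\leq 1$ (the latter because $\|(B+t)u\|^{2}=\|Bu\|^{2}+2t\operatorname{Re}(Bu,u)+t^{2}\|u\|^{2}\geq\|Bu\|^{2}$ for $u\in\mathcal{D}(B)$); taking adjoints, the same two bounds hold with $B$ replaced by $B^{*}$. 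Hence the theorem applies and it remains to establish, for all large $t>0$,
\[ \bigl(B-A(B^{*}+t)^{-1}D\bigr)^{*}=B^{*}-D(B+t)^{-1}A. \]

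Next I would establish the necessary relative bounds. Write $\|Av\|\leq c_{0}\|B^{*}v\|+c_{1}\|v\|$ with $c_{0}<1$; applying this to $v=(B^{*}+t)^{-1}y$ and invoking the resolvent estimates for $B^{*}$ gives $\|A(B^{*}+t)^{-1}\|\leq c_{0}+c_{1}/t<1$ for large $t$. Combining with a bound $\|Dx\|\leq(1+\delta)\|Bx\|+a_{\delta}\|x\|$ (available since $D$ is $B$-bounded with relative bound $\leq 1$) yields, for $x\in\mathcal{D}(B)$,
\[ \|A(B^{*}+t)^{-1}Dx\|\leq\bigl(c_{0}+\tfrac{c_{1}}{t}\bigr)\bigl((1+\delta)\|Bx\|+a_{\delta}\|x\|\bigr), \]
so $T:=A(B^{*}+t)^{-1}D$ is $B$-bounded with relative bound at most $(c_{0}+c_{1}/t)(1+\delta)$, which drops below $1$ once $\delta$ is chosen with $c_{0}(1+\delta)<1$ and $t$ is taken large. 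The symmetric computation — now using $\|D(B+t)^{-1}\|\leq 1+\delta+a_{\delta}/t$ together with the relative bound of $A$ with respect to $B^{*}$ — shows that $D(B+t)^{-1}A$ is $B^{*}$-bounded with relative bound $<1$. This is where maximal accretivity does the real work: it supplies exactly the two resolvent estimates that self-adjointness of the pivot supplied in the corollary following Theorem~\ref{thA3.1}, so the bookkeeping carries over.

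Then I would identify the adjoint and conclude. Since $\mathcal{D}(B^{*})\subset\mathcal{D}(A)$ and $A$ is closable, $A(B^{*}+t)^{-1}$ is everywhere defined and closable, hence bounded by the closed graph theorem; likewise for $D(B+t)^{-1}$. By Lemma~\ref{lemA1}, $T^{*}=D^{*}\bigl(A(B^{*}+t)^{-1}\bigr)^{*}=D^{*}\,\overline{(B+t)^{-1}A^{*}}$, and a restriction-to-core computation — using $\mathcal{D}(B^{*})\subset\mathcal{D}(A)\subset\mathcal{D}(A^{*})$, the fact that $A^{*},D^{*}$ restrict to $A,D$ on $\mathcal{D}(A),\mathcal{D}(D)$, and $(B+t)^{-1}Ax\in\mathcal{D}(B)\subset\mathcal{D}(D)$ for $x\in\mathcal{D}(B^{*})$ — identifies $T^{*}$ with $D(B+t)^{-1}A$ on $\mathcal{D}(B^{*})$, the exact analogue of~\eqref{eqA3.9}. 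Finally, the relative bounds of the preceding step give $\|T(B+t)^{-1}\|<1$ for large $t$, so $B-T$ is closed with $-t\in\rho(B-T)$ via the factorization $B-T+t=(I-T(B+t)^{-1})(B+t)$; taking adjoints of this factorization (the outer factor being bounded and boundedly invertible) and inserting the identification of $T^{*}$ gives $(B-T)^{*}=B^{*}-D(B+t)^{-1}A$, which is the required identity, so $\mathcal{A}$ is self-adjoint by the theorem.

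I expect little genuinely new difficulty compared with the corollary following Theorem~\ref{thA3.1}: the fresh point is merely recognising that maximal accretivity of $B$ (hence of $B^{*}$) yields $\|(B+t)^{-1}\|\leq 1/t$ and $\|B(B+t)^{-1}\|\leq 1$, after which the relative-bound estimates are routine. The care-demanding part is the adjoint identification — $(A(B^{*}+t)^{-1}D)^{*}$ agrees with the formal adjoint $D(B+t)^{-1}A$ only on the core $\mathcal{D}(B^{*})$, since one cannot move the adjoint past the (not bounded below) resolvent without taking a closure — so, exactly as with~\eqref{eqA3.9}, one must carry restrictions to $\mathcal{D}(B^{*})$ throughout and let the final perturbation argument supply the closure.
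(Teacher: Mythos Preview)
Your proposal is correct and follows the same template the paper has in mind: the paper gives no explicit proof here, merely pointing to the techniques of \cite{AJW} ``with slight modifications,'' which amounts to rerunning the argument of the corollary after Theorem~\ref{thA3.1} with the off-diagonal pivot $B$ in place of the diagonal pivot $D$. You correctly isolate the one genuinely new ingredient --- that maximal accretivity of $B$ (hence of $B^{*}$) furnishes the two resolvent estimates $\|(B+t)^{-1}\|\le 1/t$ and $\|B(B+t)^{-1}\|\le 1$ which replace \eqref{eqA3.7} --- and your reduction to $B$ (rather than $-B$) maximal accretive via conjugation by $\operatorname{diag}(I,-I)$ is clean.

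One small point on the closing step. Having shown that $T=A(B^{*}+t)^{-1}D$ is $B$-bounded with relative bound $<1$ and that $T^{*}|_{\mathcal D(B^{*})}=D(B+t)^{-1}A$ is $B^{*}$-bounded with relative bound $<1$, you are already in the hypotheses of Lemma~\ref{lemA3} (Hess--Kato), which immediately yields $(B-T)^{*}=B^{*}-T^{*}$ with domain $\mathcal D(B^{*})$ --- exactly as in Step~3 of the earlier corollary. Your factorization route $B-T+t=(I-T(B+t)^{-1})(B+t)$ is fine for showing $-t\in\rho(B-T)$, but ``taking adjoints and inserting the identification of $T^{*}$'' does not by itself pin down $\mathcal D((B-T)^{*})$: the bounded operator $K^{*}=(T(B+t)^{-1})^{*}$ need not map into $\mathcal D(B^{*})$, so $\{x:(I-K^{*})x\in\mathcal D(B^{*})\}=\mathcal D(B^{*})$ is not automatic. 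This is easily repaired --- either invoke Lemma~\ref{lemA3} directly, or note that $B^{*}-D(B+t)^{-1}A+t$ is itself bijective (by the mirror factorization) and is contained in the bijective $(B-T+t)^{*}$, forcing equality --- but as written the sentence elides this.
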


\begin{corollary}
Let $H_1=H_2$ and let $C=B^*$.
If $B$ is closed with $\rho(B)\neq\emptyset$ and
$\mathcal D(\mathcal A)=\mathcal D(B^*)\times\mathcal D(B)$,
then $\mathcal A$ is self-adjoint if one of the following holds:
\begin{enumerate}
\item[(a)] $A$ is $B^*$-bounded with relative bound $0$,
\item[(b)] $D$ is $B$-bounded with relative bound $0$.
\end{enumerate}
\end{corollary}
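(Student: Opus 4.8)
The plan is to reduce the statement to the equivalence theorem stated above (under the same hypotheses $H_1=H_2$, $C=B^{*}$, $B$ closed, $\rho(B)\neq\emptyset$, $\mathcal D(\mathcal A)=\mathcal D(B^{*})\times\mathcal D(B)$). That theorem identifies self-adjointness of $\mathcal A$ with the ``Schur-complement'' identities in its parts (b) and (c); so in case (a) it suffices to verify part (c) for one $\lambda\in\rho(B)$, and in case (b) to verify part (b). Everything is then modelled on the proof of the earlier relative-bound-zero corollary, with $B^{*}$ and $B$ now playing the roles that the self-adjoint diagonal entries $A$ and $D$ played there. First I would record two consequences of the hypothesis $\mathcal D(\mathcal A)=\mathcal D(B^{*})\times\mathcal D(B)$: comparing it with $\mathcal D(\mathcal A)=(\mathcal D(A)\cap\mathcal D(B^{*}))\times(\mathcal D(B)\cap\mathcal D(D))$ forces $\mathcal D(B^{*})\subset\mathcal D(A)$ and $\mathcal D(B)\subset\mathcal D(D)$. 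Then, fixing $\lambda\in\rho(B)$, the operator $(B-\lambda)^{-1}$ is bounded with range $\mathcal D(B)\subset\mathcal D(\overline D)$, so $\overline D(B-\lambda)^{-1}$ is everywhere defined and closed, hence bounded by the closed graph theorem; symmetrically $\overline A(B^{*}-\overline\lambda)^{-1}$ is bounded.

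For case (a) the target is part (c): $(B^{*}-D(B-\lambda)^{-1}A)^{*}=B-A(B^{*}-\overline\lambda)^{-1}D$. Step 1 would be to show $D(B-\lambda)^{-1}A$ is $B^{*}$-bounded with relative bound $0$: since $\|D(B-\lambda)^{-1}Ax\|\le\|\overline D(B-\lambda)^{-1}\|\,\|Ax\|$, this follows at once from the hypothesis that $A$ is $B^{*}$-bounded with relative bound $0$; in particular $B^{*}-D(B-\lambda)^{-1}A$ is closed on $\mathcal D(B^{*})$. Step 2 would be the adjoint computation: writing $D(B-\lambda)^{-1}A=KA$ with $K:=\overline D(B-\lambda)^{-1}$ bounded and everywhere defined, Lemma \ref{lemA1} gives $(D(B-\lambda)^{-1}A)^{*}=A^{*}K^{*}$; using $\mathcal D(B)\subset\mathcal D(D)\subset\mathcal D(D^{*})$, $\mathcal D(B^{*})\subset\mathcal D(A)$, $A\subset A^{*}$ and $D\subset D^{*}$, one checks that the restriction of $A^{*}K^{*}$ to $\mathcal D(B)$ equals $A(B^{*}-\overline\lambda)^{-1}D$ and that this operator is $B$-bounded. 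Step 3 would feed Steps 1--2 into the Kato--Rellich-type Lemma \ref{lemA3} to obtain the adjoint identity, hence part (c). Case (b) is the same after interchanging $A\leftrightarrow D$, $B\leftrightarrow B^{*}$ and $\lambda\leftrightarrow\overline\lambda$ (so that $A(B^{*}-\overline\lambda)^{-1}D$ becomes the small perturbation of $B$), and there one verifies part (b).

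The main obstacle will be Step 2. One has to show not merely that $(D(B-\lambda)^{-1}A)^{*}$ \emph{contains} $A(B^{*}-\overline\lambda)^{-1}D$ — the easy ``formal'' inclusion — but that it has no part living outside $\mathcal D(B)$, and one must keep track of the relative bounds so that the hypotheses of Lemma \ref{lemA3} are met exactly. This cannot be done by copying the earlier relative-bound-zero argument verbatim: because $B$ is only assumed closed, $B^{*}\neq B$, the set $\rho(B)$ need not be invariant under complex conjugation, and there is no orthogonality estimate of the type $\|(B-is)x\|^{2}=\|Bx\|^{2}+s^{2}\|x\|^{2}$ available; one is forced to work with a single fixed $\lambda\in\rho(B)$ and to route all estimates through the boundedness of $(B-\lambda)^{-1}$, $(B^{*}-\overline\lambda)^{-1}$, $\overline D(B-\lambda)^{-1}$ and $\overline A(B^{*}-\overline\lambda)^{-1}$. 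It is exactly the hypothesis that the relevant relative bound is $0$, rather than merely $<1$, that makes $D(B-\lambda)^{-1}A$ a sufficiently small perturbation of $B^{*}$ for Lemma \ref{lemA3} to close the argument.
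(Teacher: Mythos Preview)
Your approach is correct and is essentially what the paper intends: the paper gives no explicit proof here, only a blanket reference to ``the techniques used in the proofs of \cite[Theorem 3.1]{AJW} and the corresponding corollaries therein, with some slight modifications,'' which amounts precisely to adapting the proof of the earlier relative-bound-$0$ corollary (Corollary after Theorem~\ref{thA3.1}) with $B^{*},B$ playing the roles of $A,D$ --- exactly what you do.

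Two small points worth tightening. First, in Step~2 you only assert that $A(B^{*}-\overline\lambda)^{-1}D$ is ``$B$-bounded'', but Lemma~\ref{lemA3} needs the relative bound $<1$ on \emph{both} sides. The fix is that the adjoint side also has relative bound $0$: from $\|Az\|\le\varepsilon\|B^{*}z\|+b(\varepsilon)\|z\|$ with $z=(B^{*}-\overline\lambda)^{-1}Dy$, together with the boundedness of $(B^{*}-\overline\lambda)^{-1}D$ on $\mathcal D(B)$ (it is the restriction of the bounded operator $(\overline D(B-\lambda)^{-1})^{*}$) and the $B$-boundedness of $D$, you obtain $\|A(B^{*}-\overline\lambda)^{-1}Dy\|\le\varepsilon a\|By\|+c(\varepsilon,\lambda)\|y\|$ for $y\in\mathcal D(B)$. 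This is the same mechanism as in Step~1 of the earlier corollary; the hypothesis that $A$ has $B^{*}$-relative bound $0$ is what drives \emph{both} estimates, not only the one in your Step~1. Second, your worry that $(D(B-\lambda)^{-1}A)^{*}$ might have ``parts living outside $\mathcal D(B)$'' is unnecessary: Lemma~\ref{lemA3} only asks that $S^{*}$ be $T^{*}$-bounded, i.e.\ $\mathcal D(B)\subset\mathcal D(S^{*})$ with the estimate on $\mathcal D(B)$; the conclusion $(S+T)^{*}=S^{*}+T^{*}$ then automatically has domain $\mathcal D(S^{*})\cap\mathcal D(B)=\mathcal D(B)$, which yields the identity in part~(c) on the nose.
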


\appendix
\section{some lemmas on adjoints}

In this section, we will denote by $X,Y,Z$ Banach spaces.

Let $S,T$ be linear operators from $X$ to $Y$ and $X$ to $Z$, respectively.
Recall that $S$ is called $T$-bounded if $\mathcal D(T)\subset\mathcal D(S)$
and there exist constants $a,b\geq 0$ such that
$$\|Sx\|\leq a\|x\|+b\|Tx\|, x\in\mathcal D(T),$$
see \cite[Section VI.1.1]{Ka1980}.
The greatest lower bound
$b_0$ of all possible constants $b$ in the above inequality will be called the relative bound of $S$ with respective to $T$
(or simply the relative bound when there is no confusion).
If $T$ is closed and $S$ is closable, then $\mathcal D(T)\subset\mathcal D(S)$ already implies that $S$ is $T$-bounded (see \cite[Remark IV.1.5]{Ka1980}).

\begin{lemma}\label{lemA1}(\cite[Problem III.5.26]{Ka1980})
Let $S$ be a bounded everywhere defined operator from $Y$ to $Z$ and let $T$ be a densely defined operator from $X$ to $Y$.
Then $(ST)^*=T^*S^*$.
\end{lemma}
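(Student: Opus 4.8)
The plan is to prove the two operator inclusions $(ST)^*\supseteq T^*S^*$ and $(ST)^*\subseteq T^*S^*$ directly from the definition of the adjoint, invoking each of the two hypotheses on $S$ exactly where it is needed.

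First I would record the bookkeeping about domains. Since $S$ is everywhere defined, $\mathcal D(ST)=\{x\in\mathcal D(T)\,:\,Tx\in\mathcal D(S)\}=\mathcal D(T)$, which is dense in $X$; hence $ST$ is densely defined and $(ST)^*$ exists. Likewise $S^*$ is bounded and everywhere defined on $Z^*$ (being the adjoint of a bounded everywhere defined operator), and $T^*$ exists because $T$ is densely defined, so $T^*S^*$ is a well-defined operator from $Z^*$ to $X^*$ with $\mathcal D(T^*S^*)=\{z^*\in Z^*\,:\,S^*z^*\in\mathcal D(T^*)\}$. For the inclusion $T^*S^*\subseteq(ST)^*$, take $z^*\in\mathcal D(T^*S^*)$, so $S^*z^*\in\mathcal D(T^*)$; then for every $x\in\mathcal D(T)$ one has $(z^*,STx)=(S^*z^*,Tx)=(T^*S^*z^*,x)$, the first equality using the defining relation of $S^*$ applied at the point $y=Tx$ (legitimate because $S$ is everywhere defined), the second using $S^*z^*\in\mathcal D(T^*)$. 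This shows $z^*\in\mathcal D((ST)^*)$ with $(ST)^*z^*=T^*S^*z^*$. For the reverse inclusion, take $z^*\in\mathcal D((ST)^*)$ and put $w^*:=(ST)^*z^*$, so $(z^*,STx)=(w^*,x)$ for all $x\in\mathcal D(T)$; rewriting the left side as $(S^*z^*,Tx)$ gives $(S^*z^*,Tx)=(w^*,x)$ for all $x\in\mathcal D(T)$, which by the definition of $T^*$ says exactly that $S^*z^*\in\mathcal D(T^*)$ and $T^*S^*z^*=w^*=(ST)^*z^*$.

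There is no genuinely hard step; the only point to watch is the division of labour between the two assumptions on $S$. "Everywhere defined" is what makes $\mathcal D(ST)=\mathcal D(T)$ dense (so that $(ST)^*$ is even meaningful) and what licenses applying $(z^*,Sy)=(S^*z^*,y)$ at $y=Tx$ for every $x\in\mathcal D(T)$, while "bounded" is what makes $S^*$ everywhere defined, so that $T^*S^*$ is formed with no spurious domain restriction. Without these one only gets $T^*S^*\subseteq(ST)^*$ in general, so it is worth flagging in the writeup precisely where the hypotheses enter.
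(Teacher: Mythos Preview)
Your proof is correct and is the standard direct argument from the definition of the adjoint. Note, however, that the paper does not actually prove this lemma: it is simply stated in the appendix with a citation to \cite[Problem III.5.26]{Ka1980}, so there is no ``paper's own proof'' to compare against. Your write-up would serve perfectly well as a self-contained justification, and your remarks about where each hypothesis on $S$ is used are accurate and worth keeping.
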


\begin{lemma}(\cite{S1970})\label{lemA2}
Let $S$ be a densely defined operator from $Y$ to $Z$ and let $T$ be a closed densely defined operator from $X$ to $Y$.
If the range $\mathcal R(T)$ of $T$ is closed in $Y$ and has finite codimension, then $ST$ is a
densely defined operator and $(ST)^*=T^*S^*$.
\end{lemma}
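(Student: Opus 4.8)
The plan is to prove the two assertions together, with one auxiliary fact carrying most of the weight: \emph{$\mathcal D(ST)$ is dense not merely in $X$ but in $\mathcal D(T)$ equipped with the graph norm $\|x\|+\|Tx\|$.} Since $\mathcal R(T)$ is closed with $\dim\big(Y/\mathcal R(T)\big)=n<\infty$ and $\mathcal D(S)$ is dense in $Y$, the image of $\mathcal D(S)$ under the quotient map $Y\to Y/\mathcal R(T)$ is a dense subspace of a finite-dimensional space, hence all of it; so one can choose a complement $Y=\mathcal R(T)\oplus F$ with $F=\operatorname{span}\{f_1,\dots,f_n\}$ and $f_1,\dots,f_n\in\mathcal D(S)$. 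Let $P$ be the bounded projection of $Y$ onto $\mathcal R(T)$ along $F$. Then $\mathcal D(S)\cap\mathcal R(T)$ is dense in $\mathcal R(T)$: given $v\in\mathcal R(T)$, pick $d\in\mathcal D(S)$ near $v$; since $Pd=d-\sum_i\alpha_if_i$ with all $f_i\in\mathcal D(S)$, we get $Pd\in\mathcal D(S)\cap\mathcal R(T)$ and $\|Pd-v\|=\|P(d-v)\|\le\|P\|\,\|d-v\|$.

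Because $T$ is closed, $\big(\mathcal D(T),\,\|x\|+\|Tx\|\big)$ is a Banach space which $T$ maps boundedly onto the Banach space $\mathcal R(T)$; by the open mapping theorem there is $C_0$ such that every $y\in\mathcal R(T)$ has a preimage $x\in\mathcal D(T)$ with $\|x\|+\|Tx\|\le C_0\|y\|$. Hence, given $x_0\in\mathcal D(T)$, by choosing $v\in\mathcal D(S)\cap\mathcal R(T)$ close to $Tx_0$ and $x_1\in\mathcal D(T)$ with $Tx_1=v-Tx_0$ and $\|x_1\|+\|Tx_1\|\le C_0\|v-Tx_0\|$, the vector $x_0+x_1$ lies in $\mathcal D(ST)$ and is graph-norm close to $x_0$. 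This establishes the density claim, and in particular that $ST$ is densely defined.

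For $(ST)^*=T^*S^*$: the inclusion $T^*S^*\subset(ST)^*$ is immediate from $(g,STx)=(S^*g,Tx)=(T^*S^*g,x)$ for $x\in\mathcal D(ST)$, using that $\mathcal D(ST)$ is dense. For the converse, let $g\in\mathcal D\big((ST)^*\big)$ and set $h:=(ST)^*g$, so $(g,STx)=(h,x)$ on $\mathcal D(ST)$. First I would show $g\in\mathcal D(S^*)$: for $y\in\mathcal D(S)\cap\mathcal R(T)$ write $y=Tx$ with $x\in\mathcal D(ST)$ and $\|x\|\le C_0\|y\|$, whence $|(g,Sy)|=|(g,STx)|=|(h,x)|\le C_0\|h\|\,\|y\|$; for arbitrary $y\in\mathcal D(S)$ decompose $y=Py+\sum_i\alpha_i(y)f_i$ with $Py=y-\sum_i\alpha_i(y)f_i\in\mathcal D(S)\cap\mathcal R(T)$ and the $\alpha_i$ bounded linear functionals, so that, the finitely many numbers $(g,Sf_i)$ being fixed, $|(g,Sy)|\le C\|y\|$. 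Thus $g\in\mathcal D(S^*)$; put $\phi:=S^*g$. Finally, $x\mapsto(\phi,Tx)$ and $x\mapsto(h,x)$ are both graph-norm-continuous on $\mathcal D(T)$ and coincide on the graph-norm-dense set $\mathcal D(ST)$ (there $(\phi,Tx)=(S^*g,Tx)=(g,STx)=(h,x)$), hence coincide on all of $\mathcal D(T)$; this says precisely that $\phi\in\mathcal D(T^*)$ with $T^*\phi=h$, i.e. $g\in\mathcal D(T^*S^*)$ and $T^*S^*g=(ST)^*g$.

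The step I expect to be the real obstacle — and the only point where finite codimension is used in an essential way — is the reduction from $\mathcal D(S)$ to $\mathcal D(S)\cap\mathcal R(T)$, needed both for the density of $\mathcal D(S)\cap\mathcal R(T)$ in $\mathcal R(T)$ and for $g\in\mathcal D(S^*)$; it rests on being able to choose the complement $F$ \emph{inside} $\mathcal D(S)$, which works precisely because $Y/\mathcal R(T)$ is finite-dimensional so that the dense image of $\mathcal D(S)$ fills it. The secondary subtlety, easy to overlook, is that the density of $\mathcal D(ST)$ must be proved in the graph norm of $T$, not merely in $X$, since that is what legitimizes the closing continuity-and-density argument for $\phi\in\mathcal D(T^*)$.
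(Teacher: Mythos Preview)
The paper does not give its own proof of this lemma; it is simply quoted from Schechter \cite{S1970}. Your argument is correct and complete. The three load-bearing ideas---choosing the complement $F$ of $\mathcal R(T)$ \emph{inside} $\mathcal D(S)$ (which is exactly where finite codimension enters), using the open mapping theorem on $T:\big(\mathcal D(T),\text{graph norm}\big)\to\mathcal R(T)$ to upgrade density of $\mathcal D(S)\cap\mathcal R(T)$ in $\mathcal R(T)$ to graph-norm density of $\mathcal D(ST)$ in $\mathcal D(T)$, and then closing by graph-norm continuity---are precisely the right ones, and your handling of the bound $|(g,Sy)|\le C\|y\|$ via the decomposition $y=Py+\sum_i\alpha_i(y)f_i$ is clean. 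This is essentially the approach of Schechter's original two-page note, so there is nothing to contrast.
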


\begin{lemma}(\cite[Corollary 1]{HK1970})\label{lemA3}
Let $T$ be closed densely defined operators from $X$ to $Y$.
Suppose $S$ is a $T$-bounded operator such that $S^*$ is $T^*$-bounded, with both relative bounds $<1$.
Then $S+T$ is closed and $(S+T)^*=S^*+T^*$.
\end{lemma}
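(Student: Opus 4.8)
The plan is to separate the assertion into ``$S+T$ is closed'' and ``$(S+T)^{*}=S^{*}+T^{*}$'', to observe that the first half needs only that $S$ is $T$-bounded with relative bound $<1$, and to reduce the second half to a single domain inclusion. (There is no loss in assuming $\mathcal D(S)=\mathcal D(T)$, since on $\mathcal D(T^{*})$ the adjoints of $S$ and of $S|_{\mathcal D(T)}$ coincide.) Fix $a,b\geq0$ with $b<1$ and $\|Sx\|\leq a\|x\|+b\|Tx\|$ on $\mathcal D(T)$. On the common domain $\mathcal D(T)=\mathcal D(S+T)$ one has both $(1-b)\|Tx\|\leq\|(S+T)x\|+a\|x\|$ and $\|(S+T)x\|\leq(1+b)\|Tx\|+a\|x\|$, so the graph norms of $T$ and of $S+T$ are equivalent there; since $T$ is closed, so is $S+T$. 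The same argument, applied to the closed operator $T^{*}$ and its $T^{*}$-bounded perturbation $S^{*}$ (recall $\mathcal D(T^{*})\subset\mathcal D(S^{*})$), shows that $S^{*}+T^{*}$ is closed, with domain $\mathcal D(T^{*})$.

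For the adjoint identity, the inclusion $S^{*}+T^{*}\subset(S+T)^{*}$ is immediate from $(g,(S+T)x)=(S^{*}g,x)+(T^{*}g,x)$ for $g\in\mathcal D(T^{*})$ and $x\in\mathcal D(T)$; moreover, if $g\in\mathcal D((S+T)^{*})\cap\mathcal D(S^{*})$ then $(g,Tx)=((S+T)^{*}g-S^{*}g,x)$ on $\mathcal D(T)$ forces $g\in\mathcal D(T^{*})$. So everything reduces to $\mathcal D((S+T)^{*})\subset\mathcal D(T^{*})$. When $T$ admits $\lambda\in\rho(T)$ with $\|S(T-\lambda)^{-1}\|<1$ --- the situation in every application of this lemma below, where $T$ is self-adjoint and one may take $\lambda=i\mu$, $\mu$ large, using $\|(T-i\mu)^{-1}\|\leq1/\mu$ and $\|T(T-i\mu)^{-1}\|\leq1$ --- set $W:=S(T-\lambda)^{-1}$, which is bounded, everywhere defined, with $I+W$ boundedly invertible, and use the factorization
\[
S+T-\lambda=(I+W)(T-\lambda).
\]
Applying Lemma~\ref{lemA1} to this product, and once more to the identity $W(T-\lambda)=S$ (to evaluate $W^{*}$), and then unwinding domain and action, yields $(S+T)^{*}=S^{*}+T^{*}$. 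For a general closed densely defined $T$ on a Banach space no resolvent is at hand; one then argues as in \cite{HK1970}, deforming along $T+tS$, $t\in[0,1]$, and propagating the identity $(T+tS)^{*}=T^{*}+tS^{*}$ from $t=0$, using at each short step that $(t-t_{0})S$ is $(T+t_{0}S)$-bounded and $(t-t_{0})S^{*}$ is $(T^{*}+t_{0}S^{*})$-bounded, with relative bounds $<1$.

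I expect the reverse inclusion $\mathcal D((S+T)^{*})\subset\mathcal D(T^{*})$ to be the only genuine obstacle. The definition of $(S+T)^{*}$ controls $x\mapsto(g,(S+T)x)$ only on $\mathcal D(T)$, and the naive estimate $|(g,Tx)|\leq|((S+T)^{*}g,x)|+\|g\|\,(a\|x\|+b\|Tx\|)$ does not close because of the $\|Tx\|$ term on the right; one must use both relative-bound hypotheses at once --- automatically through the resolvent factorization, or step by step as in \cite{HK1970}.
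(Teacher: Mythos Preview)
The paper does not prove Lemma~\ref{lemA3}; it is quoted verbatim from Hess--Kato \cite[Corollary~1]{HK1970} and used as a black box throughout Sections~2 and~3. There is therefore no proof in the paper to compare your sketch against.

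That said, your outline is sound. The closedness of $S+T$ via equivalence of graph norms, the trivial inclusion $S^{*}+T^{*}\subset(S+T)^{*}$, and the reduction of the reverse inclusion to $\mathcal D((S+T)^{*})\subset\mathcal D(T^{*})$ are all correct and standard. Your resolvent factorization $S+T-\lambda=(I+W)(T-\lambda)$ is valid whenever $\lambda\in\rho(T)$ with $\|W\|<1$, and after taking adjoints via Lemma~\ref{lemA1} the one point that needs care is that $(I+W^{*})^{-1}$ preserves $\mathcal D(T^{*})$; this is precisely where the second hypothesis ($S^{*}$ is $T^{*}$-bounded with relative bound $<1$) enters, through the companion operator $V:=S^{*}(T^{*}-\bar\lambda)^{-1}$ with $\|V\|<1$. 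As you observe, this special case already covers every application of the lemma in the present paper, since there $T$ is always a self-adjoint operator on a Hilbert space and one may take $\lambda=i\mu$ with $\mu$ large. For the general Banach-space statement you rightly defer to \cite{HK1970}; just be aware that the one-line deformation you sketch is not self-contained as written --- propagating the identity from $t_{0}$ to $t_{0}+\epsilon$ would invoke the very lemma being proved --- and the actual Hess--Kato argument instead compares the closed graphs $G(T)$ and $G(T+S)$ (and hence their annihilators, which are the graphs of the adjoints) via the gap between subspaces.
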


{\textbf{Acknowledgment.}}~{Project supported by Natural Science Foundation of China (Grant Nos. 11371185, 11361034)
and Major Subject of Natural Science Foundation of Inner Mongolia of China (Grant No. 2013ZD01).
The authors would like to express their sincere thanks to Tin-Yau Tam for his useful comments.}


\begin{thebibliography}{99}
\bibitem{AF2003}R. A. Adams, J. J. F. Fournier, Sobolev spaces, Second edition, Academic Press, Amsterdam, 2003.
\bibitem{AJW}Alatancang, G. Jin, D. Wu,
On symplectic self-adjointness of Hamiltonian operator matrices,
SCIENCE CHINA Mathematics, to appear.
\bibitem{ALMS}F. V. Atkinson, H. Langer, R. Mennicken, A. A. Shkalikov,
The essential spectrum of some matrix operators, Math. Nachr. 167 (194) 5--20.
\bibitem{Eng1998}K-J. Engel, Matrix representation of linear operators on product spaces,
Rend. Circ. Mat. Palermo (2) Suppl. 56 (1998) 219--224.
\bibitem{EL}D. Eschw\'{e}, M. Langer,
Variational principles for eigenvalues of self-adjoint operator functions,
Integr. Equ. Oper. Theory 49 (3) (2004) 287--321.
\bibitem{FFMM}M. Faierman, R. J. Fries, R. Mennicken, M. M\"{o}ller,
On the essential spectrum of the linearized Navier-Stokes operator,
Integr. Equ. Oper. Theory 38 (1) (2000) 9--27.
\bibitem{Hassi}S. Hassi, A. Sandovici, H. de Snoo, H. Winkler,
Extremal extensions for the sum of nonnegative selfadjoint relations,
Proc. Amer. Math. Soc. 135 (10) (2007) 3193--3204.
\bibitem{HK1970}P. Hess, T. Kato, Perturbation of closed operators and their adjoints,
Comment. Math. Helv. 45 (1970) 524--529.
\bibitem{Ka1980}T. Kato, Perturbation theory for linear operators, Corrected printing of the second edition,
Springer, Berlin, 1980.
\bibitem{KK2001}N. D. Kopachevsky, S. G. Krein,
Operator approach to linear problems of hydrodynamics, Vol. I: self-adjoint problems for an ideal fluid,
Birkh\"{a}user, Basel, 2001.
\bibitem{KK2003}N. D. Kopachevsky, S. G. Krein,
Operator approach to linear problems of hydrodynamics, Vol. II: nonself-adjoint problems for viscous fluids,
Birkh\"{a}user, Basel, 2003.
\bibitem{KLT}M. Kraus, M. Langer, C. Tretter,
Variational principles and eigenvalue estimates for unbounded block operator matrices and applications,
J. Comput. Appl. Math. 171 (1-2) (2004) 311--334.
\bibitem{KZ}C. R. Kuiper, H. J. Zwart,
Connections between the algebraic Riccati equation and the Hamiltonian for Riesz-spectral systems,
J. Math. Systems Estim. Control 6 (4) (1996) 1--48.
\bibitem{Lif1989}A. E. Lifschitz, Magnetohydrodynamics and spectral theory (Developments in Electromagnetic Theory and Applications, 4),
Kluwer Academic, Dordrecht, 1989.
\bibitem{LX}C. W. Lim, X. S. Xu, Symplectic elasticity: theory and applications,
Applied Mechanics Reviews 63 (5) (2011), article ID 050802, 10 pages.
\bibitem{M1998}R. E. Megginson, An introduction to Banach space theory (Graduate Texts in Mathematics, 183), Springer, New York, 1998.
\bibitem{MS}R. Mennicken, A. A. Shkalikov,
Spectral decomposition of symmetric operator matrices,
Math. Nachr. 179 (1996) 259--273.
\bibitem{MS2008}M. M\"{o}ller, F. H. Szafraniec,
Adjoints and formal adjoints of matrices of unbounded operators,
Proc. Amer. Math. Soc. 136 (6) (2008) 2165--2176.
\bibitem{Nag1989}R. Nagel, Towards a ``matrix theory" for unbounded operator matrices,
Math. Z., 201 (1989) 57--68.
\bibitem{Nagy}B. Sz.-Nagy, C. Foias, H. Bercovici, L. K\'{e}rchy,
Harmonic analysis of operators on Hilbert space, Revised and enlarged edition, Springer, New York, 2010.
\bibitem{Nai1968}M. A. Naimark, Linear differential operators, Part II: Linear differential operators in Hilbert space,
Frederick Ungar, New York, 1968.
\bibitem{QC1}J. Qi, S. Chen,
Essential spectra of singular matrix differential operators of mixed order in the limit circle case,
Math. Nachr. 284 (2-3) (2011) 342--354.
\bibitem{QC2}J. Qi, S. Chen,
Essential spectra of singular matrix differential operators of mixed order,
J. Differ. Equ. 250 (12) (2011) 4219--4235.
\bibitem{S1970}M. Schechter, The conjugate of a product of operators, J. Funct. Anal. 6 (1970) 26--28.
\bibitem{Stra}M. Strauss,
Spectral estimates and basis properties for self-adjoint block operator matrices,
Integr. Equ. Oper. Theory 67 (2) (2010) 257--277.
\bibitem{SS}H. Sun, Y. Shi, Self-adjoint extensions for linear Hamiltonian systems with two singular endpoints,
J. Funct. Anal. 259 (8) (2010) 2003--2027.
\bibitem{TL1980}A. E. Taylor, D. C. Lay, Introduction to functional analysis, Second edition,
John Wiley and Sons, New York, 1980.
\bibitem{Tha}B. Thaller, The Dirac equations (Theoretical and Mathematical Physics),
Springer, Berlin, 1992.
\bibitem{Tre}C. Tretter, Spectral theory of block operator matrices and applications,
Imperial College Press, London, 2008.
\bibitem{W1972}R. W\"{u}st, Holomorphic operator families and stability of selfadjointness,
Math. Z. 125 (1972), 349--358.
\bibitem{Wyss2008}C. Wyss, Perturbation theory for Hamiltonian operator matrices and Riccati
equations, PhD thesis, University of Bern, 2008.
\bibitem{Wyss2011}C. Wyss,
Hamiltonians with Riesz bases of generalised eigenvectors and Riccati equations,
Indiana Univ. Math. J., 60 (5) (2011) 1723--1766.
\bibitem{YZL}W. Yao, W. Zhong, C. W. Lim,
Symplectic elasticity, World Scientific, New Jersey, 2009.
\bibitem{ZZ}W. Zhong, X. Zhong,
Method of separation of variables and Hamiltonian system,
Numer. Methods Partial Differential Equations 9 (1) (1993) 63--75.

\end{thebibliography}
\end{document}